\def\a{{\alpha}}
\def\g{{\gamma}}
\def\b{{\beta}}
\def\O{{\Omega}}
\newcommand{\R}{{\mathbb R}}
\newcommand{\N}{{\mathbb N}}
\newcommand{\htwo}{{\mathbb H^2}}
\def\H{\mathbb{H}}
\def\G{{\Gamma}}
\def\ve{{\varepsilon}}
\newtheorem{theorem}{Theorem}[section]
\newtheorem{lemma}[theorem]{Lemma}
\newtheorem{proposition}[theorem]{Proposition}
\newtheorem{definition}[theorem]{Definition}
\begin{document}

\begin{title}
{Minimal surfaces with limit ends in $\H^2\times\R$}
\end{title}
\vskip .2in

\begin{author}
{M. Magdalena Rodr\'\i guez\thanks{Research partially supported by a
Spanish MEC-FEDER Grant no. MTM2007-61775, a Regional J. Andaluc\'\i
a Grant no. P09-FQM-5088 and "Grupo Singular" of the UCM.}}
\end{author}
\date{}
\maketitle

\begin{abstract}
  For any $m\geqslant 1$, we construct properly embedded minimal
  surfaces in $\H^2\times\R$ with genus zero, infinitely many vertical
  planar ends and $m$ limit ends. We also provide examples with an
  infinite countable number of limit ends. All these examples are
  vertical bi-graphs.
\end{abstract}

\noindent{\it Mathematics Subject Classification:} Primary 53A10,
   Secondary 49Q05, 53C42

\section{Introduction}
The theory of properly embedded minimal surfaces with genus zero
(i.e. those which are topologically a punctured sphere) in Euclidean
space $\R^3$ has been largely studied
(see~\cite{cmCourant,cm34,mpe3,mpe1} and the references therein).  The
final classification of such surfaces was given by Bill Meeks,
Joaqu\'\i n P\'erez and Antonio Ros in~\cite{mpr6}. The only examples
with infinite topology are Riemann minimal surfaces. They form a
1-parameter family whose natural limits are the catenoid and the
helicoid. When the planar ends are horizontally placed, each Riemann
minimal example is invariant by a non-horizontal translation, its
intersection with any horizontal plane is either a circle or a
straight line, it has infinitely many annular ends asymptotic to
horizontal planes, and has exactly two limit ends\footnote[1]{A limit
  end $e$ of a non-compact surface $M$ is an accumulation point of the
  set $\mathcal{E}(M)$ of ends of $M$. See~\cite{mpe1}.}: one top and one
bottom limit end.

Laurent Hauswirth~\cite{hau1} constructed Riemann-type minimal
surfaces in $\H^2\times\R$, which are properly embedded and have genus
zero, infinitely many ends asymptotic to horizontal slices and two
limit ends: one top and one bottom limit end. It is natural to ask if
there are examples of properly embedded minimal surfaces in
$\H^2\times\R$ with genus zero, infinitely many ends and $m_0$ limit
ends, with $m_0\neq 2$. In this paper we construct examples for any
$m_0\geq 1$, and we also construct examples with an infinite countable
number of limit ends.

In a joint work with Filippo Morabito~\cite{moro1}, we have recently
constructed a $(2k-3)$-parameter family $\mathcal{F}_k$ of properly
embedded minimal surfaces in $\H^2\times\R$ with total (intrinsic)
curvature $4\pi(1-k)$, genus zero and $k$ vertical planar ends (i.e.
annular ends asymptotic to vertical geodesic planes), for any
$k\geqslant 2$. We call them {\it minimal $k$-noids}. Each surface in
this family is invariant by reflection symmetry about the horizontal
slice $\H^2\times\{0\}$. Pyo~\cite{pyo1} has constructed independently
a 1-parameter family of surfaces with the same properties. The
examples given by Pyo, which are included in $\mathcal{F}_k$, are also
invariant by reflection symmetry about $k$ vertical geodesic planes
forming an angle $\pi/k$. The examples with genus zero and infinitely
many ends we construct in the present paper are obtained by taking
limits when $k\to+\infty$ of certain surfaces $M_k\in\mathcal{F}_k$.

The simple ends (i.e. non-limit ends) of the surfaces we construct are
vertical planar ends. Fixed an orientation of $\H^2$, we can order
these simple ends cyclically. If a limit end can be obtained as
accumulation of simple ends ordered following the negative orientation
(resp. the positive orientation) but it cannot be obtained as
accumulation of simple ends ordered following the positive orientation
(resp. negative orientation), we will say that it is a {\it left}
(resp. {\it right}) limit end. In other case, we will say that it is a
{\it 2-sided} limit end.

Now we state the main results of this paper.

\begin{theorem}\label{th:finite}
  For any $m_0\geq 1$, there exists a properly embedded minimal
  surface $\Sigma$ in $\H^2\times\R$ with genus zero, infinitely many
  vertical planar ends and $m_0$ limit ends, which is symmetric with
  respect to a horizontal slice (in fact, it is a vertical bi-graph).
  Moreover, if we denote by $E_\infty^1,\ldots,E_\infty^{m_0}$ the
  limit ends of $\Sigma$, we can prescribe each $E_\infty^m$ to be
  left, right or 2-sided.
\end{theorem}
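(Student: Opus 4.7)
The plan is to realise $\Sigma$ as a smooth limit of minimal $k$-noids $M_k\in\mathcal{F}_k$ as $k\to+\infty$, using the $(2k-3)$-parameter freedom of $\mathcal{F}_k$ to concentrate the $k$ vertical planar ends of $M_k$ into $m_0$ prescribed clusters. Concretely, I would fix $m_0$ cyclically ordered points $q_1,\ldots,q_{m_0}\in\partial_\infty\H^2$, one for each desired limit end, and for each large $k$ pick $M_k\in\mathcal{F}_k$ whose $k$ vertical geodesic planes of symmetry split into $m_0$ groups, the $m$-th group consisting of about $k/m_0$ planes whose endpoints on $\partial_\infty\H^2$ accumulate towards $q_m$. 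The left/right/2-sided character of the limit end at $q_m$ is built in at this stage: for a 2-sided end the cluster approaches $q_m$ from both sides along $\partial_\infty\H^2$, while for a left (resp. right) limit end only from one side, the opposite side being absorbed by the neighbouring cluster.

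Once the sequence $\{M_k\}$ is set up, I would pass to the limit. Each $M_k$ is a vertical bi-graph symmetric with respect to $\H^2\times\{0\}$, lies in a horizontal slab, and admits uniform area and curvature estimates away from the clusters $\{q_m\}\times\R$, using vertical geodesic planes as barriers together with standard curvature estimates for minimal surfaces with bounded area. A diagonal argument then yields a smooth sublimit $\Sigma$, properly embedded and minimal in $(\H^2\times\R)\setminus\bigcup_m(\{q_m\}\times\R)$, which extends to a properly embedded minimal surface in all of $\H^2\times\R$ by a removable-singularity/barrier argument across each vertical line $\{q_m\}\times\R$.

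It remains to verify the topological conclusions. Genus zero and the vertical bi-graph property pass to the limit because the convergence is smooth with multiplicity one. Outside any compact neighbourhood of the $m_0$ clusters, only finitely many planar ends of each $M_k$ survive and these converge to planar ends of $\Sigma$; exhausting $\H^2\times\R$ by larger compacta produces infinitely many vertical planar ends of $\Sigma$. Each cluster then produces exactly one limit end of the prescribed type, by construction and by the cyclic ordering of the planar ends along $\partial_\infty\H^2$; the symmetry about $\H^2\times\{0\}$ and the bi-graph property persist in $\Sigma$ since they are closed conditions under smooth convergence.

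The main obstacle will be to show that the clusters can actually be made to accumulate to points of $\partial_\infty\H^2$ without $M_k$ pinching off or losing embeddedness, and that no spurious limit ends or extra components appear in $\Sigma$. This requires a careful quantitative control of the parameters in $\mathcal{F}_k$, in particular uniform lower bounds on the ``neck sizes'' of $M_k$ between consecutive planar ends within a single cluster, together with flux-type arguments along simple loops of $M_k$ separating neighbouring clusters to rule out collapse in the limit. The countable limit-end examples mentioned in the abstract should follow by allowing $m_0$ itself to grow with $k$ in a controlled way while keeping these uniform estimates.
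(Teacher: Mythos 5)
Your high-level idea --- obtain $\Sigma$ as a limit of minimal $k$-noids $M_k\in\mathcal{F}_k$ whose ends cluster at $m_0$ prescribed points of $\partial_\infty\H^2$ --- is indeed the motivation stated in the paper's introduction, but your proposed implementation has a genuine gap: everything that is actually hard is deferred to ``uniform curvature and area estimates away from the clusters'' and ``careful quantitative control of the parameters,'' and no mechanism is offered for obtaining that control. The family $\mathcal{F}_k$ is parametrized through Jenkins-Serrin data, not through the asymptotic planes of the ends of $M_k$, so you cannot simply ``pick $M_k$'' with prescribed clustering of ends; and standard curvature estimates do not apply directly to the bi-graphs $M_k$ (whose total curvature $4\pi(1-k)$ is unbounded and whose graph structure degenerates along the symmetry slice), nor do they by themselves rule out the limit being empty, disconnected, or having the simple ends collapse pairwise so that no infinite family of distinct ends survives. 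Your removable-singularity step across $\{q_m\}\times\R$ is also vacuous, since $q_m\in\partial_\infty\H^2$ and these lines are not in $\H^2\times\R$: the real issue at the clusters is not a singularity of the limit but the count and sidedness of the ends accumulating there, which your ``by construction and by the cyclic ordering'' does not establish.

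The paper supplies exactly the missing machinery by working upstream of the conjugation. It builds an explicit increasing sequence of convex Jenkins-Serrin semi-ideal polygonal domains $\Omega_k\subset\Omega_{k+1}$, with ideal vertices placed by horocycle conditions (condition $(\star)$ and Lemma~\ref{lem:car}) so that they accumulate at the $m_0$ prescribed limit ideal vertices from the prescribed sides; it proves the associated Jenkins-Serrin graphs $u_k$ (normalized by $u_k(P)=0$) have no divergence lines via flux computations along geodesics (Lemma~\ref{lem:lines}, using Lemmas~\ref{lem:flux}--\ref{lem:limitflux} and Proposition~\ref{prop:div}), which is the rigorous substitute for your ``neck-size'' and ``flux along loops'' heuristics; it then conjugates, using the uniform bound away from zero on the angle function to get convergence $\Sigma_k^*\to\Sigma_\infty^*$, and uses the isometry between $\Sigma_\infty$ and $\Sigma_\infty^*$ to show consecutive boundary curves $\Gamma_i^{m*},\Gamma_{i+1}^{m*}$ stay a positive distance apart, which is what guarantees infinitely many genuinely distinct simple ends and exactly one limit end of the prescribed type per cluster. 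Without an argument playing the role of each of these three steps, your proposal does not constitute a proof.
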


If we take limits of appropriately chosen minimal surfaces in
$\mathcal{F}_k$, we can also obtain properly embedded minimal surfaces
in $\htwo\times\R$ with genus zero, infinitely many vertical planar
ends and an infinite countable number of limit ends.

\begin{theorem}\label{th:uncountable}
  There exists a properly embedded minimal surface in $\htwo\times\R$
  with genus zero, infinitely many vertical planar ends and an
  infinite countable number of limit ends $\{E_\infty^m\}_{m\in\N}$,
  which is symmetric with respect to a horizontal slice (in fact, it
  is a vertical bi-graph).  Moreover, we can prescribe each
  $E_\infty^m$ to be left, right or 2-sided.
\end{theorem}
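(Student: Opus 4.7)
The plan is to adapt the construction used for Theorem~\ref{th:finite} by replacing the finite set of clustering locations by a countable, discrete one. First I would fix a horizontal geodesic $\g\subset\H^2\times\{0\}$ together with a countable set of distinct points $\{p_m\}_{m\in\N}\subset\g$ arranged so that consecutive $p_m$ are separated by rapidly increasing hyperbolic distances; in particular $\{p_m\}$ is a locally finite subset of $\H^2$ whose only accumulation is at a single point of $\partial_\infty\H^2$. Around each $p_m$ we will create one limit end of $\Sigma$ by concentrating, as $k\to\infty$, a growing number of vertical planar ends of the approximating $k$-noids $M_k\in\mathcal{F}_k$.

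For every $k\geqslant 2$, I would choose $M_k\in\mathcal{F}_k$ by exploiting the $(2k-3)$ parameters of the family so that its $k$ vertical planar ends split into groups of sizes $n_1^k,\ldots,n_{N_k}^k$, with $N_k\to\infty$ and each $n_m^k\to\infty$ as $k\to\infty$, and so that the $m$-th group is asymptotic to vertical geodesic planes clustering close to $p_m$. The accumulation of the $m$-th cluster onto $p_m$ from one or both sides along $\g$ would be dictated, exactly as in Theorem~\ref{th:finite}, by whether $E_\infty^m$ is to be left, right, or 2-sided. This step uses the parameter freedom of $\mathcal{F}_k$ exactly as in Theorem~\ref{th:finite}, but iterated over a growing number of clusters.

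Since each $M_k$ is a vertical bi-graph symmetric with respect to $\H^2\times\{0\}$, I can work on the top half and combine the maximum principle with standard curvature and area estimates for stable minimal bi-graphs (the same estimates used in Theorem~\ref{th:finite}) to apply a diagonal argument on $m$: passing to a subsequence, the $M_k$ converge smoothly on compact subsets of $\H^2\times\R$ to a minimal surface $\Sigma$ which inherits the bi-graph property, the horizontal symmetry, the genus-zero topology, and the asymptotic structure of the clusters near each $p_m$. Properness and embeddedness of $\Sigma$ follow from the bi-graph structure and from the uniform control on inter-cluster distances, which rules out folding or unbounded sheeting in any compact region.

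The main obstacle is the simultaneous tuning of three scales: the positions $p_m$, the cluster sizes $n_m^k$, and the intra-cluster asymptotic parameters in $\mathcal{F}_k$ must be chosen so that, together, (i) each finite collection of clusters behaves, for large $k$, exactly as in the proof of Theorem~\ref{th:finite}, producing a limit end with the prescribed left/right/2-sided type at $p_m$; (ii) the ends of $M_k$ not belonging to any of the first $N_k$ clusters escape to infinity in $\H^2$ and therefore do not contribute extra limit ends to $\Sigma$; and (iii) the sequence $\{p_m\}$ itself, by its rapid spacing, produces no spurious limit end in $\Sigma$ beyond the intended countable family $\{E_\infty^m\}_{m\in\N}$. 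Once this balancing is achieved, all the qualitative properties of $\Sigma$ in the statement follow from the corresponding properties of the $M_k$ by smooth compact convergence.
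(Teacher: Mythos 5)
Your proposal reproduces the high-level strategy (limits of $k$-noids with ends grouped into a growing number of growing clusters) but it leaves unaddressed precisely the step that constitutes the proof. The paper does not obtain $\Sigma$ by choosing $M_k\in\mathcal{F}_k$ abstractly ``by exploiting the $(2k-3)$ parameters'' and then invoking curvature and area estimates for stable bi-graphs: no such estimates appear anywhere in the proof of Theorem~\ref{th:finite}, so your appeal to ``the same estimates used in Theorem~\ref{th:finite}'' refers to something that is not there. What the paper actually does is work on the Jenkins--Serrin side: it builds an explicit nested sequence of convex semi-ideal polygonal domains $\O_k$ satisfying condition $(\star)$, with the limit ideal vertices $p_\infty^m$ placed inductively on $\partial_\infty\htwo$ inside intervals $[b^k,a^k]$ cut out by horocycles through $P_0$ and associated geodesics (and similarly for the non-limit ideal vertices $p^m_{2i-1}$), verifies via Lemma~\ref{lem:car} that each $\O_k$ is Jenkins--Serrin, rules out divergence lines for the graphs $u_k$ by the flux computations of Lemma~\ref{lem:lines}, passes to the limit graph $u_\infty$ on $\O_\infty$, and only then conjugates and reflects. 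The non-degeneration of the limit, which you defer to an unspecified ``balancing'' of three scales and call the main obstacle, is exactly what the horocycle-based choice of vertices plus the flux argument delivers; asserting that it can be achieved is not a proof of it.

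Two further concrete problems. First, the clusters of ends cannot accumulate at interior points $p_m\in\g\subset\H^2$: the simple ends are asymptotic to complete vertical geodesic planes $\eta^{m\,*}_i\times\R$, and a limit end arises from their accumulation at an ideal point of $\partial_\infty\htwo$ (the limit ideal vertices $p^m_\infty$), so the data you should be prescribing live on $\partial_\infty\htwo$, not on a geodesic in $\H^2$. Second, your claim (iii) that rapid spacing prevents any spurious limit end cannot hold as stated: the end space $\mathcal{E}(M)$ is compact, so an infinite family of limit ends necessarily accumulates at some end, which is then itself a limit end. The paper handles this by making the increasing sequence $x_\infty^m$ (forced by $x_\infty^{m+1}\geq b^m$) diverge to $+\infty$, so the limit ideal vertices accumulate only at $p_\infty^1=\infty$, i.e.\ at an end already listed in the family $\{E_\infty^m\}_{m\in\N}$. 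Your construction needs an analogous arrangement, and ``rapid spacing'' alone does not provide it.
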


All the examples constructed in Theorems~\ref{th:finite}
and~\ref{th:uncountable} are obtained by reflection symmetry about the
horizontal slice $\htwo\times\{0\}$ from a vertical graph contained in
$\htwo\times[0,+\infty)$ whose boundary lies on $\htwo\times\{0\}$.

The author would like to thank Joaqu\'\i n P\'erez for some helpful
conversations.

\section{Preliminaries}
\label{sec:pre}

We consider the half-plane model of $\htwo$,
\[
\H^2=\{(x,y)\in\R^2\ |\ y>0\},
\]
with the hyperbolic metric $g_{-1}=\frac{1}{y^2}(dx^2+dy^2)$.  We
denote by $t$ the coordinate in $\R$ and consider in $\htwo\times\R$
the usual product metric,
\[
ds^2=\frac{1}{y^2}(dx^2+dy^2)+dt^2.
\]

Given an open domain $\Omega \subset {\mathbb H}^2 $ and a smooth
function $u:\Omega \to \R$, the graph of $u$ is a minimal surface in
$\htwo\times\R$ when
\begin{equation}
\label{eq.min.surf}
{\rm div}\left( \frac {\nabla u}
{\sqrt{1+|\nabla u|^2}} \right)=0,
\end{equation}
where all terms are calculated with respect to the metric of $\htwo$.

Finally, we denote by $\partial_\infty\H^2$ the infinite boundary of
$\H^2$, i.e.
\[
\partial_\infty\H^2=\{(x,y)\in\R^2\ |\ y=0\}.
\]

\subsection{Flux of a minimal graph along a curve}
Let $u$ be a minimal graph defined on a domain $\Omega\subset\htwo$.
Assume $\partial \Omega$ is piecewise smooth and $u$ extends
continuously to $\overline{\Omega}$ (possibly with infinite values).
We define the {\it flux} of $u$ along a curve $\Gamma\subset\partial
\Omega$ as
\[
F_u(\Gamma)= \int_\Gamma\left\langle\frac{\nabla u}{\sqrt{1+|\nabla
u|^2}}, \eta\right\rangle ds,
\]
where $\eta$ is the outer normal to $\partial\Omega$ in $\htwo$ and
$ds$ is the arc-length of~$\partial\Omega$.

In the case $\Gamma\subset\O$, we can see $\Gamma$ in the boundary
of different subdomains of $\O$, with two possible induced
orientations. The flux $F_u(\Gamma)$ of $u$ along $\G$ is then
well-defined up to sign, and $|F_u(\Gamma)|$ is well-defined.

Given an arc $C\subset\htwo$, we will denote by $|C|$ the length of
$C$ in $\htwo$. The proof of the following result can be found in
\cite{ner2}, Lemmae 1 and 2.

\begin{lemma}[\cite{ner2}]\label{lem:flux}
  Let $u$ be a minimal graph on a domain $\Omega\subset\htwo$.
  \begin{itemize}
  \item[(i)] For every subdomain $\Omega'\subset\Omega$ such that
    $\overline{\O'}$ is compact, we have $F_u(\partial\Omega')=0$.
  \item[(ii)] Let $C$ be a piecewise smooth curve contained in the
    interior of $\O$, or a convex curve in $\partial\Omega$ where $u$
    extends continuously and takes finite values.  If $C$ has finite
    length, then $|F_u(C)|<|C|$.
  \item[(iii)] Let $T\subset\partial\Omega$ be a geodesic arc of
    finite length such that $u$ diverges to $+\infty$
    (resp. $-\infty$) as one approaches $T$ within $\Omega$. Then
    $F_u(T)=|T|$ (resp. $F_u(T)=-|T|$).
  \end{itemize}
\end{lemma}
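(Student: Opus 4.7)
My approach splits along the three parts of the lemma. For part (i), note that the minimal graph equation~(\ref{eq.min.surf}) is precisely $\mathrm{div}(X_u)=0$ on $\O$, where $X_u=\nabla u/\sqrt{1+|\nabla u|^2}$. Since $\overline{\O'}$ is compact in $\O$, the divergence theorem applied to $\O'$ in $\htwo$ yields $F_u(\partial\O') = \int_{\O'}\mathrm{div}(X_u)\,dA = 0$.

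Part (ii) rests on the pointwise bound $|X_u|^2 = |\nabla u|^2/(1+|\nabla u|^2)<1$, valid wherever $\nabla u$ is finite. If $C$ is compact in $\O$, then $|\nabla u|$ attains a maximum $M<+\infty$ along $C$, so $|X_u|\leq M/\sqrt{1+M^2}<1$ on $C$ and $|F_u(C)|<|C|$ is immediate. If $C$ is a convex arc on $\partial\O$ where $u$ extends continuously with finite values, I would approximate $C$ by parallel convex curves $C_\delta\subset\O$ at small distance $\delta$, each compact in $\O$, use (i) to relate $F_u(C)$ to $F_u(C_\delta)$ modulo transversal contributions of length $\delta$, and pass to the limit. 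The convexity is used to guarantee enough boundary regularity to extend $X_u$ continuously up to $C$; strictness in the limit follows from a standard measure-theoretic observation, since the sets $\{|X_u|\leq 1-1/n\}$ exhaust $C$.

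The main content is part (iii). The plan is to apply (i) to a thin one-sided neighbourhood of $T$ inside $\O$. In Fermi coordinates $(s,\rho)$ based on $T$, with $s$ arclength along $T$ and $\rho$ signed hyperbolic distance into $\O$, let $T_\ve$ denote the equidistant curve $\{\rho=\ve\}$, and close it off by two transversal geodesic arcs of length $\ve$ to bound a strip $R_\ve\subset\O$. By (i) the total flux along $\partial R_\ve$ vanishes, and by (ii) the contribution of the two short transversals is $O(\ve)$. Denoting by $\nu$ the unit normal on $T_\ve$ pointing away from $T$ (the outer normal of $R_\ve$), we obtain $F_u(T)=-\int_{T_\ve}\langle X_u,\nu\rangle\,ds+O(\ve)$. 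Since $u\to+\infty$ on $T$, at points close to $T$ the gradient $\nabla u$ must point toward $T$ (the direction of steepest ascent of $u$) with $|\nabla u|\to+\infty$, which formally forces $\langle X_u,\nu\rangle\to -1$ uniformly on $T_\ve$ and $|T_\ve|\to|T|$, yielding $F_u(T)=|T|$.

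The main obstacle is making the convergence $\langle X_u,\nu\rangle\to -1$ quantitative rather than merely heuristic. The standard remedy is a barrier argument: over a small geodesic rectangle adjacent to $T$ one constructs an explicit minimal graph $v$ that blows up along $T$ and is bounded on the opposite side (a Jenkins--Serrin--Scherk type solution, whose existence over geodesic polygons in $\htwo$ is classical). By the maximum principle $v\leq u+\mathrm{const}$ in a one-sided neighbourhood of $T$, and the flux of $v$ along $T$ can be computed directly to equal $|T|$; comparison of $X_u$ with $X_v$ on $T_\ve$ then traps $F_u(T)$ between quantities converging to $|T|$ from both sides. The case $u\to-\infty$ on $T$ follows immediately by applying the statement just proven to $-u$.
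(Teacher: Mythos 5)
The paper itself does not prove this lemma --- it is quoted from Nelli--Rosenberg \cite{ner2}, Lemmae 1 and 2 --- so your argument can only be judged on its own terms. Part (i) and the interior case of (ii) are fine. The boundary case of (ii) has a genuine gap: your strictness argument rests on $|X_u|<1$ holding almost everywhere on $C$, which is precisely the point at issue, and the claim that convexity of $C$ forces $X_u$ to extend continuously up to $C$ is unjustified (boundary gradient regularity does not follow from convexity of the arc plus continuity of $u$). More tellingly, your argument never uses the hypothesis that $u$ takes \emph{finite} values on $C$; yet the conclusion fails without it, since by part (iii) a geodesic boundary arc (which is in particular convex) on which $u\to+\infty$ satisfies $|F_u(C)|=|C|$. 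The standard role of convexity is different: for a non-geodesic convex arc one passes to the geodesic chord $\Gamma$ joining the endpoints of $C$, which lies in $\Omega$ by convexity; the divergence theorem over the region between $C$ and $\Gamma$ (justified by the continuity of $u$ up to $C$) gives $|F_u(C)|=|F_u(\Gamma)|<|\Gamma|\leq|C|$, and the residual geodesic case requires a separate comparison argument that genuinely uses the finiteness of $u$ on $C$.

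In part (iii) the gap is the passage from the pointwise barrier inequality $v\leq u+c$ to an inequality between fluxes: the flux along $T$ is not monotone under pointwise domination of minimal graphs, so ``comparison of $X_u$ with $X_v$ on $T_\ve$'' does not by itself trap $F_u(T)$. (The preliminary claim that $\nabla u$ points toward $T$ with $|\nabla u|\to+\infty$ uniformly on $T_\ve$ is also unproved; $u\to+\infty$ on $T$ only forces the gradient to blow up in an averaged sense.) To close this you need one of the two standard mechanisms: either the monotonicity $\langle X_u-X_v,\nabla u-\nabla v\rangle\geq 0$ integrated over sublevel sets of $u-v$, which is what actually converts a barrier into a flux comparison; or the coarea argument on the level curves $\Lambda_n=\{u=n\}$, which converge to $T$, carry flux of absolute value $\int_{\Lambda_n}|\nabla u|(1+|\nabla u|^2)^{-1/2}\,ds$, and satisfy $\int_{\Lambda_n}|\nabla u|^{-1}ds\to 0$ along a subsequence because the region $\{u>n\}$ near $T$ has finite area; combined with lower semicontinuity of length and the upper bound $|F_u(T)|\leq|T|$ this yields the equality. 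As written, the key equality $F_u(T)=|T|$ is asserted rather than proved.
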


The last statement in Lemma~\ref{lem:flux} admits the following
generalization.

\begin{lemma}[\cite{ner2}]\label{lem:limitflux}
  For each $n\in\N$, let $u_n$ be a minimal graph on a fixed domain
  $\Omega\subset\htwo$ which extends continuously to
  $\overline\Omega$, and let $T$ be a geodesic arc of finite length
  in~$\partial\Omega$.
  \begin{itemize}
  \item[(i)] If $\{u_n\}_n$ diverges uniformly to $+\infty$ on compact
    subsets of $T$ while remaining uniformly bounded in compact
    subsets of $\Omega$, then $F_{u_n}(T)\to|T|$.
  \item[(ii)] If $\{u_n\}_n$ diverges uniformly to $+\infty$ in
    compact subsets of $\Omega$ while remaining uniformly bounded on
    compact subsets of $T$, then $F_{u_n}(T)\to-|T|$.
  \end{itemize}
\end{lemma}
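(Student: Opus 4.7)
\textit{Proof plan.}
The plan for both parts is the same at the level of ideas: pass to a subsequential limit (of $u_n$ in (i) or of the graphs $G_n$ in (ii)), apply Lemma~\ref{lem:flux}(iii) to the limit, and transfer the flux identity back to the $u_n$'s via Lemma~\ref{lem:flux}(i) on a thin strip along $T$.

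\textit{Part (i).} The interior gradient estimate for the minimal surface equation in $\htwo$ together with the uniform local boundedness gives, after passing to a subsequence, $u_n \to u_\infty$ in $C^2_{\mathrm{loc}}(\Omega)$, with $u_\infty$ a minimal graph on $\Omega$. I would next show that $u_\infty$ diverges to $+\infty$ at $T$ via Jenkins--Serrin barriers: on a small region $R$ adjacent to a compact subarc $T' \Subset T$ (and bounded otherwise by curves in $\Omega$), solve the Dirichlet problem with boundary values $M$ on $T'$ and $0$ on the rest, obtaining a minimal graph $h_M$. Since $u_n \to +\infty$ uniformly on $T'$ and $u_n \geq -K$ on the inner boundary of $R$, the comparison principle gives $u_n \geq h_M - K$ on $R$ for $n$ large, hence $u_\infty \geq h_M - K$; letting $M \to \infty$, the monotone Jenkins--Serrin limit $h_\infty$ has value $+\infty$ on $T'$, which forces $u_\infty \to +\infty$ at $T'$. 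Lemma~\ref{lem:flux}(iii) then yields $F_{u_\infty}(T) = |T|$.

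To transfer this identity back to the $u_n$, fix $\delta > 0$ and let $S_\delta \subset \Omega$ be the strip bounded by $T'$, the equidistant curve $\alpha_\delta$ to $T'$ at distance $\delta$ inside $\Omega$, and two short geodesic arcs $\sigma_1, \sigma_2$ of length $\delta$ closing the ends. Lemma~\ref{lem:flux}(i) gives
\[
F_{u_n}(T') = -F_{u_n}^{\mathrm{out}}(\alpha_\delta) - F_{u_n}(\sigma_1) - F_{u_n}(\sigma_2),
\]
and the analogous identity for $u_\infty$, with $|F_{u_n}(\sigma_i)| \leq \delta$. Since $\alpha_\delta$ lies in a compact subset of $\Omega$, $C^2_{\mathrm{loc}}$ convergence yields $F_{u_n}(\alpha_\delta) \to F_{u_\infty}(\alpha_\delta)$, so $\limsup_n |F_{u_n}(T') - F_{u_\infty}(T')| \leq 4\delta$. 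Sending $\delta \to 0$ and using Lemma~\ref{lem:flux}(ii) to absorb the endpoint tail $T \setminus T'$ (whose length can be made arbitrarily small) gives $F_{u_n}(T) \to |T|$ along the subsequence; a standard subsequence-of-subsequence argument promotes this to convergence of the full sequence.

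\textit{Part (ii).} Now $u_n \to +\infty$ on compacts of $\Omega$, so one cannot extract a $C^2_{\mathrm{loc}}$ limit of $u_n$ itself. Instead, I would work with the graphs $G_n \subset \htwo \times \R$: curvature estimates for stable minimal surfaces, after a vertical translation $G_n - c_n$ (say $c_n = u_n(x_0)$ for a fixed $x_0 \in \Omega$), give locally smooth subsequential convergence to a minimal lamination $\mathcal{L}$ of $\htwo \times \R$. The combination ``$u_n$ bounded on $T$, $u_n \to +\infty$ on interior compacts'' forces the horizontal region where $u_n - c_n$ remains in a bounded height range to collapse onto $T$, so the vertical cylinder $T \times \R$ appears as a leaf of $\mathcal{L}$ along $T$. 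This in turn forces $\langle \nabla u_n/\sqrt{1+|\nabla u_n|^2}, \eta\rangle \to -1$ along $T$ in the integrated sense; combined with the same thin-strip transfer as in (i) one obtains $F_{u_n}(T) \to -|T|$.

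\textit{Main obstacle.} The most delicate step is the barrier construction in Step~2 of (i): producing a Jenkins--Serrin region $R$ in $\htwo$ adjacent to $T$ with the admissibility inequalities needed for $h_M$ (and its monotone limit $h_\infty$) to exist, and verifying the comparison $u_n \geq h_M - K$ on $\partial R$ from the hypotheses. The rest of (i) is divergence-theorem bookkeeping plus the subsequence argument. In (ii) the analogous difficulty is replacing the missing $C^2_{\mathrm{loc}}$ compactness by the lamination limit and identifying $T \times \R$ as a leaf of $\mathcal{L}$ near $T$.
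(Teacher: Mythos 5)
First, a remark on the comparison you asked for: the paper does not prove this lemma at all --- it is imported from Nelli--Rosenberg \cite{ner2} (their Lemmas 1 and 2) --- so your attempt has to be judged on its own merits. Your part (i) is essentially the standard argument and is sound: interior compactness gives a subsequential limit $u_\infty$; the barrier $h_M\nearrow h_\infty$ over a small geodesic triangle with base $T'$ is admissible because $|T'|$ is strictly less than the sum of the other two sides, so $u_\infty=+\infty$ on $T$; Lemma~\ref{lem:flux}(iii) gives $F_{u_\infty}(T)=|T|$; and the thin-strip application of Lemma~\ref{lem:flux}(i) together with the trivial bound $|F_{u_n}(C)|\leq|C|$ transfers this to the $u_n$ and absorbs the tail $T\setminus T'$.

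Part (ii), however, has a genuine gap. The translation by $c_n=u_n(x_0)\to+\infty$ sends the portion of the graph lying over $T$ (which sits at heights in a fixed interval $[-M,M]$) off to height $-\infty$, so that portion escapes every compact set and the local lamination limit of $G_n-c_n$ carries no information about it. Even granting that $T\times\R$ occurs as a leaf, the sentence ``this in turn forces $\langle\nabla u_n/\sqrt{1+|\nabla u_n|^2},\eta\rangle\to-1$ along $T$ in the integrated sense'' is exactly the assertion of the lemma, not a consequence of leaf convergence: the flux integrand lives on the boundary curve $\{(y,u_n(y)):\,y\in T'\}$, where you have no curvature estimates (those are interior estimates, and $u_n$ is only assumed continuous up to $T$) and hence no convergence of unit normals. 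Pushing the computation to an interior equidistant curve $\gamma_\delta$ via the divergence theorem does not close the gap either, because for fixed $\delta$ the gradient $|\nabla u_n|$ need not blow up on $\gamma_\delta$: the transition region where the graph becomes vertical can sit at an $n$-dependent distance from $T$. The classical proof (Jenkins--Serrin, adapted to $\htwo$ in \cite{ner2}) attacks this transition region directly by an area/flux comparison on a fixed thin domain $\Delta$ adjacent to $T'$, using that the graph is area-minimizing in $\overline{\Delta}\times\R$ and that every level set $\{u_n=c\}$ with $c$ between $\sup_{T'}u_n$ and the (arbitrarily large) values on the inner boundary separates $T'$ from that inner boundary and so has length bounded below; the coarea formula then forces the graph over $\Delta$ to be nearly vertical in measure, which is what pins $F_{u_n}(T')$ to $-|T'|$. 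You would need to supply an argument of this kind, or a genuinely quantitative version of your lamination picture, to complete part (ii).
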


\subsection{Divergence lines}
Let $\Omega\subset\htwo$ be a polygonal domain (i.e. a domain whose
edges are geodesic arcs of $\htwo$) with vertices in
$\H^2\cup\partial_\infty\H^2$, possibly infinitely many.  Given a
sequence $\{u_k\}_k$ of minimal graphs defined on $\Omega$, we define
its {\it convergence domain} as
\[
\mathcal{B}=\left\{p\in\Omega\ |\ \{|\nabla u_k(p)|\}_k \mbox{ is
    bounded}\right\},
\]
and the {\it divergence set} of $\{u_k\}_k$ as
\[
\mathcal{D}=\Omega-\mathcal{B}.
\]

From Lemma 4.3 in~\cite{marr1}, we know that the divergence set $\mathcal{D}$ is composed of geodesic arcs contained in~$\O$, called {\it
  divergence lines}, each one joining two points of $\partial\Omega$
(including the vertices of $\Omega$).  The following proposition
describes the convergence domain and the divergence set of a sequence
of minimal graphs. Its proof can be found in~\cite{marr1}, Lemmae 4.2
and 4.3 and Proposition 4.4.

\begin{proposition}[\cite{marr1}]\label{prop:div}
  Let $\Omega\subset\H^2$ be a polygonal domain, and $\{u_k\}_k$ a
  sequence of minimal graphs on $\Omega$. Suppose that $\mathcal{D}$
  is a countable set of divergence lines.  Then passing to a
  subsequence we have:
  \begin{enumerate}
  \item $\mathcal{D}$ is composed of pairwise disjoint geodesic arcs
    contained in~$\O$ (called divergence lines), each one joining two
    points in $\partial\Omega$ (including the vertices of $\Omega$).
  \item $\{|F_{u_k}(T)|\}_k$ converges to $|T|$ as $k\to+\infty$, for
    any geodesic arc $T$ with finite length contained in a divergence
    line $L\subset \mathcal{D}$.
  \item $\mathcal{B}$ is an open set. Moreover, for any component
    $\Omega'$ of $\mathcal{B}$ and any $p\in\Omega'$,
    $\{u_k-u_k(p)\}_k$ converges uniformly on compact subsets of
    $\Omega'$ to a minimal graph $u_\infty$.
  \end{enumerate}
\end{proposition}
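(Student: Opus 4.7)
The plan is to combine the standard curvature estimates for stable minimal surfaces with a structure theorem for the limit lamination in $\htwo\times\R$. Each $M_k=\mathrm{graph}(u_k)$ is a stable embedded minimal surface, so a Schoen-type estimate bounds its second fundamental form $|A_k|$ on compact subsets far from $\partial M_k$. After passing to a subsequence, the $M_k$ converge smoothly on compact subsets of $\Omega\times\R$ to a minimal lamination $\mathcal{L}$. A point $p\in\Omega$ belongs to $\mathcal{B}$ precisely when $\mathcal{L}$ is a non-vertical graphical system above $p$, and belongs to $\mathcal{D}$ precisely when some leaf of $\mathcal{L}$ is vertical over $p$, so the statement reduces to understanding $\mathcal{L}$ and its vertical leaves.

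For item~(3), if $p\in\mathcal{B}$ the tangent planes of $M_k$ at $(p,u_k(p))$ stay uniformly transverse to $\partial_t$, and by the curvature bound this transversality persists on a small hyperbolic ball around $p$, producing a uniform bound on $|\nabla u_k|$ there; hence $\mathcal{B}$ is open. Interior Schauder estimates for equation \eqref{eq.min.surf} together with Arzelà--Ascoli then yield a subsequential limit of $u_k-u_k(p)$ on each component $\Omega'$ of $\mathcal{B}$; changing the base point inside $\Omega'$ only shifts this limit by a constant, and a diagonal extraction over countably many components and a countable dense family of base points gives a single working subsequence.

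For item~(1), I would argue that any vertical complete minimal surface in $\htwo\times\R$ is a product $\gamma\times\R$ with $\gamma$ a complete geodesic of $\htwo$, since being everywhere tangent to $\partial_t$ forces the horizontal projection to have vanishing geodesic curvature. Hence each divergence line is a geodesic arc in $\Omega$, and it must run between two points of $\partial\Omega$ (vertices allowed): an interior endpoint would force the closed lamination $\mathcal{L}$ to contain a leaf $\gamma\times\R$ with $\gamma$ ending inside $\Omega$, which is impossible. Two divergence lines also cannot meet transversally, for a transverse crossing of the corresponding vertical leaves of $\mathcal{L}$ would force the $M_k$ to lose embeddedness for $k$ large, contradicting that each $u_k$ is a graph. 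The countability hypothesis then permits a further diagonal extraction that aligns all of these geometric conclusions on a single subsequence.

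For item~(2), along any finite subarc $T\subset L\subset\mathcal{D}$ the unit normals to $M_k$ converge in $L^1(T)$ to a horizontal unit field orthogonal to $T$, since $\mathcal{L}$ is vertical there; equivalently $\nabla u_k/\sqrt{1+|\nabla u_k|^2}\to\pm\eta$ in $L^1(T)$, giving $|F_{u_k}(T)|\to|T|$, in the spirit of Lemma~\ref{lem:limitflux}. The main obstacle is the structural part of item~(1): identifying all vertical leaves of $\mathcal{L}$ as products $\gamma\times\R$ and excluding interior endpoints and transverse intersections, which requires combining the embeddedness of the $M_k$, the completeness of $\mathcal{L}$, and the maximum principle; the PDE content of~(3) and the flux limit in~(2) follow once that geometric picture is secured.
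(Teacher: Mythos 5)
First, note that the paper does not prove Proposition~\ref{prop:div}: it is imported from \cite{marr1} (Lemmae 4.2--4.3 and Proposition 4.4 there), and the strategy you outline --- stability of the graphs, Schoen-type curvature estimates, local limits that are vertical totally geodesic planes $\gamma\times\R$ at divergence points, and the flux consequence --- is indeed the one used in that reference. However, as written your argument has a genuine gap at its foundation: you characterize $\mathcal{B}$ and $\mathcal{D}$ through the limit lamination $\mathcal{L}$ of the \emph{untranslated} surfaces $M_k$ in $\Omega\times\R$, and this characterization is false. The graphs may drift vertically: for $u_k\equiv k$ one has $\mathcal{B}=\Omega$ and $\mathcal{D}=\emptyset$ while $\mathcal{L}$ is empty; more seriously, at a divergence point $p$ with $u_k(p)\to+\infty$ there need be no leaf of $\mathcal{L}$ above $p$ at all, so ``some leaf of $\mathcal{L}$ is vertical over $p$'' cannot detect $\mathcal{D}$. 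The correct device, which you use only in item (3), is to renormalize pointwise: at each $p$ consider the graphs of $u_k-u_k(p)$, which all pass through $(p,0)$; the curvature estimates then give a subsequential limit surface through $(p,0)$, and $p\in\mathcal{D}$ exactly when its tangent plane there is vertical. Items (1) and (2) must be rebuilt on these renormalized limits (the gradient, hence the flux integrand, is unaffected by vertical translation, so the flux argument survives the repair).

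Two further steps are asserted rather than proved. First, you need that a limit leaf whose tangent plane is vertical at one point is vertical everywhere (only then is it $\gamma\times\R$ with $\gamma$ a geodesic): this follows because the angle function $\nu=\langle N,\partial_t\rangle$ of a limit of graphs has a sign and satisfies the Jacobi equation (as $\partial_t$ is Killing), so the strong maximum principle forces $\nu\equiv 0$ once it vanishes somewhere; ``being everywhere tangent to $\partial_t$'' is the conclusion of this argument, not a hypothesis you may assume. Second, in item (2) the convergence of $\nabla u_k/\sqrt{1+|\nabla u_k|^2}$ to $\pm\eta$ must hold with the \emph{same} sign along all of $T$; if the limiting co-normal switched sign on subsets of positive length, $|F_{u_k}(T)|$ would converge to something strictly smaller than $|T|$. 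One must therefore show that the limiting unit normal direction is locally constant, hence constant, along the connected divergence line $L$. With these repairs your outline does match the proof in \cite{marr1}.
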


\subsection{Jenkins-Serrin graphs on semi-ideal polygonal domains}
\label{subsec:graphs}

Let $\Omega$ be a polygonal domain.  We say that $\Omega$ is {\it
  semi-ideal} when no two consecutive vertices of $\O$ are either in
$\H^2$ or at $\partial_\infty\H^2$ (see Figure~\ref{fig:JScondition}).
We call {\it interior vertices} of $\O$ to those which are contained
in $\htwo$; {\it ideal vertices} of $\Omega$ to those lying on
$\partial_\infty\htwo$; and {\it limit ideal vertices} of $\O$ to the
limit points of ideal vertices of~$\O$.

\begin{figure}
\begin{center}
\includegraphics[scale=.35]{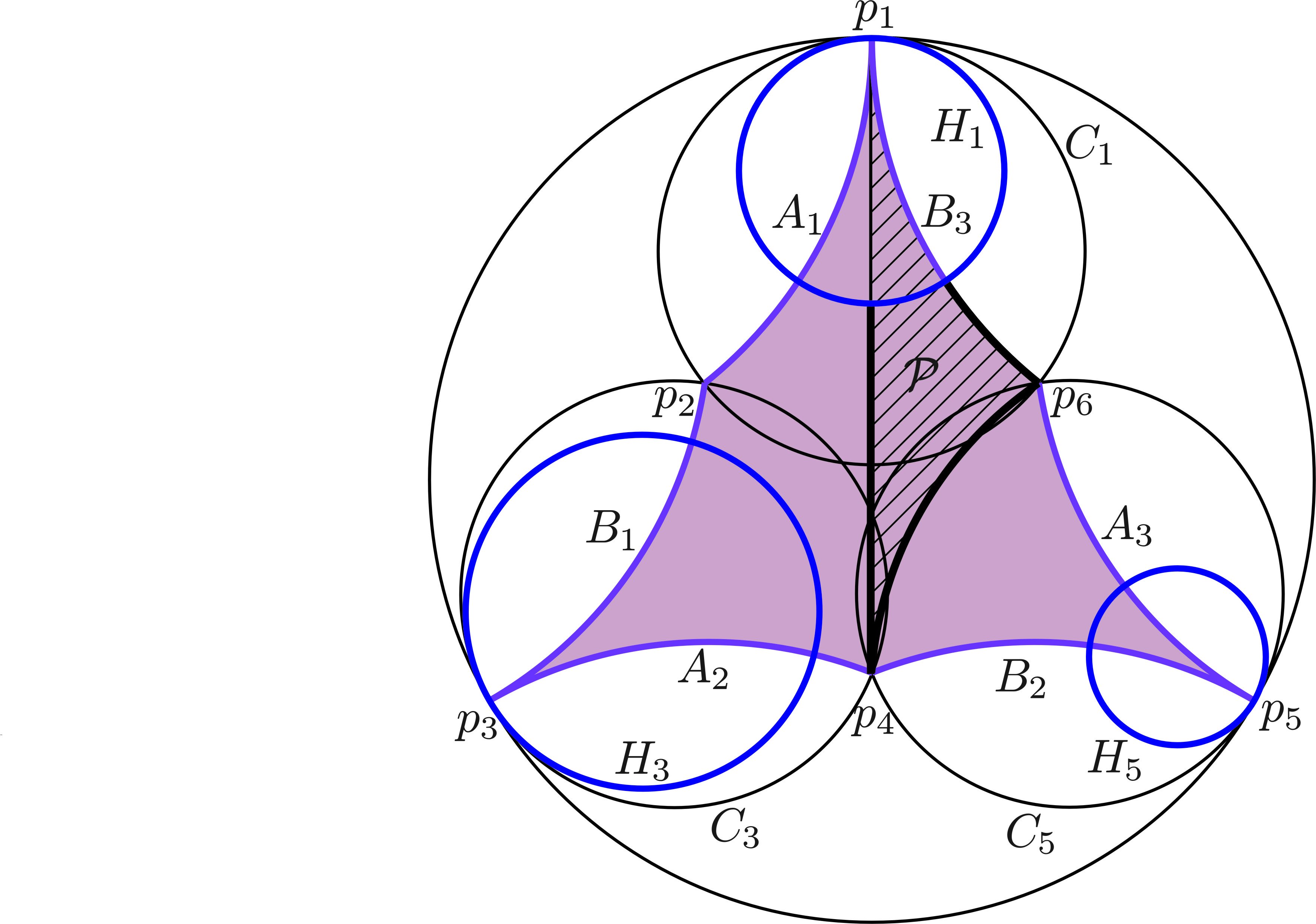}
\caption{Example of a convex Jenkins-Serrin semi-ideal polygonal
domain which verifies condition $(\star)$.} \label{fig:JScondition}
\end{center}
\end{figure}

Fix a semi-ideal polygonal domain $\O$ with a finite number of
vertices $p_1,\ldots,p_{2k}$ (cyclically ordered). We can assume the
odd vertices $p_{2i-1}$ are ideal, and then the even vertices $p_{2i}$
are interior.

For each $i=1,\ldots,k$, we call $A_i$ (resp. $B_i$) the geodesic arc
joining $p_{2i-1},p_{2i}$ (resp.  $p_{2i},p_{2i+1}$).  We consider a
horocycle $H_{2i-1}$ at $p_{2i-1}$.  Assume $H_{2i-1}\cap
H_{2j-1}=\emptyset$ for any $i\neq j$.  Given a polygonal domain
$\mathcal{P}$ inscribed in $\Omega$ (i.e. a polygonal domain
$\mathcal{P}\subset\Omega$ whose vertices are vertices of $\Omega$,
possibly at $\partial_\infty\htwo$), we denote by
$\Gamma(\mathcal{P})$ the part of $\partial \mathcal{P}$ outside the
horocycles (observe that $\Gamma(\mathcal{P})=\partial \mathcal{P}$ in
the case all the vertices of $\mathcal{P}$ are interior). Also let us
call
\[
\a(\mathcal{P})=\sum_{i=1}^k\left|A_i\cap\Gamma(\mathcal{P})\right|
\qquad {\rm and}\qquad
\b(\mathcal{P})=\sum_{i=1}^k\left|B_i\cap\Gamma(\mathcal{P})\right| ,
\]
where we recall that $|\bullet|=\mbox{length}_\htwo(\bullet)$. See
Figure~\ref{fig:JScondition}.

\begin{definition}
  \label{Def:JS}
  {\rm Let $\Omega$ be a semi-ideal polygonal domain with a finite number of
    vertices $p_1,\ldots,p_{2k}$, where
    $p_{2i-1}\in\partial_\infty\htwo$ and $p_{2i}\in\htwo$. We say
    that $\Omega$ is {\it Jenkins-Serrin} if for some choice of
    horocycles $H_{2i-1}$ as above it holds:
    \begin{itemize}
    \item[(i)] $\a(\Omega)=\b(\Omega)$.
    \item[(ii)]
      $2\a(\mathcal{P})<\left|\Gamma(\mathcal{P})\right|$
      and
      $2\b(\mathcal{P})<\left|\Gamma(\mathcal{P})\right|$,
      for every polygonal domain $\mathcal{P}$ inscribed in~$\Omega$,
      $\mathcal{P}\neq\Omega$.
    \end{itemize}
  }
\end{definition}

We remark that condition (i) in the above definition does not depend
on the choice of horocycles; and if the inequalities of condition (ii)
are satisfied for some choice of horocycles, then they continue to
hold for ``smaller'' horocycles (see the argument given by Pascal
Collin and Harold Rosenberg in~\cite{cor2}, pages 1884 and 1885). The
following result is a particular case of Theorem 4.12 in~\cite{marr1}.

\begin{theorem}[\cite{marr1}]
  \label{th:JS} Let $\Omega$ be a
  semi-ideal polygonal domain with edges $A_1, B_1,\ldots,A
  _k, B_k$ (cyclically
  ordered).  There exists a solution $u$ for the minimal graph
  equation~\eqref{eq.min.surf} in $\O$ with boundary values
  \[
  u|_{A_i}=+\infty\quad\mbox{and}\quad u|_{B_i}=-\infty, \quad
  \mbox{for any }\ i=1,\ldots, k,
  \]
  if, and only if, $\Omega$ is a Jenkins-Serrin domain. Moreover, such
  a solution is unique up to an additive constant, when it exists.
\end{theorem}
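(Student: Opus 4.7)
The plan is to adapt the classical Jenkins--Serrin argument to the hyperbolic setting, where ideal vertices are handled by horocyclic truncation. The proof splits naturally into necessity of (i)--(ii), existence, and uniqueness.

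\textbf{Necessity.} Assume a solution $u$ exists, and let $\mathcal{P}$ be an inscribed polygonal domain. Truncating each ideal vertex of $\mathcal{P}$ by the horocycle $H_{2i-1}$ yields a compact subdomain $\mathcal{P}_h$, to which Lemma~\ref{lem:flux}(i) applies: $F_u(\partial \mathcal{P}_h)=0$. By Lemma~\ref{lem:flux}(iii), arcs along $A_i$ contribute $+|A_i\cap\G(\mathcal{P}_h)|$ and arcs along $B_i$ contribute $-|B_i\cap\G(\mathcal{P}_h)|$; each interior geodesic edge $T$ of $\mathcal{P}$ contributes a flux of absolute value strictly less than $|T|$ by Lemma~\ref{lem:flux}(ii); finally, the horocyclic arcs contribute fluxes bounded by their hyperbolic length, which shrinks to $0$ as the horocycles are pushed toward their ideal vertices (while the differences $|A_i\cap\G|-|B_{i-1}\cap\G|$ at a common ideal vertex remain finite and invariant under this shrinking, as one checks directly in the upper half-plane model). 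Taking $\mathcal{P}=\O$ and letting the horocycles shrink gives $\a(\O)=\b(\O)$; for $\mathcal{P}\neq\O$, combining the flux balance with the identity $|\G(\mathcal{P})|=\a(\mathcal{P})+\b(\mathcal{P})+\sum_T|T|$ produces both strict inequalities of (ii).

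\textbf{Existence.} I would approximate the infinite boundary data by finite data: for each $n\in\N$, solve the Dirichlet problem on $\O$ (with ideal vertices cut off by horocycles shrinking with $n$) with boundary values $+n$ on the $A_i$-arcs, $-n$ on the $B_i$-arcs, and continuous interpolation on the horocyclic caps. Standard PDE theory (or Perron's method) produces a minimal graph $u_n$. Fix $p_0\in\O$ and set $v_n=u_n-u_n(p_0)$; apply Proposition~\ref{prop:div} to its convergence domain $\mathcal{B}$ and divergence set $\mathcal{D}$. The crucial step is to exclude any interior divergence line $L$: such a line, together with portions of $\partial\O$, bounds an inscribed polygonal domain $\mathcal{P}_L$. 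By Proposition~\ref{prop:div}(2), $|F_{v_n}(L)|\to|L|$, while Lemma~\ref{lem:limitflux} controls the flux along $A_i$- and $B_i$-portions of $\partial\mathcal{P}_L$; the identity $F_{v_n}(\partial\mathcal{P}_L)=0$ then forces an equality of the form $2\a(\mathcal{P}_L)=|\G(\mathcal{P}_L)|$ or $2\b(\mathcal{P}_L)=|\G(\mathcal{P}_L)|$, contradicting (ii). Hence $\mathcal{B}=\O$ and a subsequence of $v_n$ converges to a minimal graph $u_\infty$ on $\O$; the prescribed infinite boundary values on the $A_i$ and $B_i$ follow from barrier arguments using Scherk-type sub/super-solutions on simpler Jenkins--Serrin subdomains near each edge.

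\textbf{Uniqueness and main obstacle.} For uniqueness up to an additive constant, I would apply a flux formula to the divergence-free vector field $\nabla u_1/\sqrt{1+|\nabla u_1|^2}-\nabla u_2/\sqrt{1+|\nabla u_2|^2}$ on horocyclically truncated subdomains of $\O$; on the $A_i\cup B_i$ the boundary integrand vanishes in the limit because both graphs share infinite boundary values there, and the resulting flux identity together with the strict inequality of Lemma~\ref{lem:flux}(ii) forces $u_1-u_2$ to be constant by a Collin--Rosenberg-type maximum-principle argument. The principal obstacle I expect is in the existence step, namely the exclusion of interior divergence lines: because $\O$ is semi-ideal the divergence lines may terminate at ideal vertices, and one must coordinate the shrinking horocycles carefully so that Proposition~\ref{prop:div}(2) and Lemma~\ref{lem:limitflux} can be applied simultaneously on the same inscribed polygonal domain, converting the strict Jenkins--Serrin inequalities into the required contradiction without accumulating uncontrolled error on the horocyclic caps.
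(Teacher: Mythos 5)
The paper does not prove this statement: it is quoted verbatim as a particular case of Theorem 4.12 of \cite{marr1}, so there is no in-paper proof to compare against. Your sketch follows the same strategy that \cite{marr1} uses and that the present paper reproduces for its limit domain $\O_\infty$ (Lemma~\ref{lem:lines} and Proposition~\ref{prop:graph}): horocyclic truncation plus the flux identities of Lemma~\ref{lem:flux} for necessity, approximation by finite boundary data with exclusion of divergence lines via Proposition~\ref{prop:div} and Lemma~\ref{lem:limitflux} for existence, and the standard flux/maximum-principle comparison of the two unit normals for uniqueness. So the approach is the right one; the outline is essentially the proof in the cited reference.

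Two points in your sketch deserve more care than you give them, though neither is a wrong turn. First, in the necessity step the strict inequalities of (ii) do not follow merely from letting the horocycles shrink, because $|F_u(T)|<|T|$ for a truncated interior edge $T$ degenerates to a non-strict inequality in the limit when $T$ has infinite length; one must fix a compact subarc $T'$ of some interior edge (which exists since $\mathcal{P}\neq\O$) and carry the fixed positive deficit $|T'|-|F_u(T')|$ through the limit. Second, in the existence step the exclusion of divergence lines is genuinely a case analysis (as in Lemma~\ref{lem:lines}): a line of finite length, a line joining two ideal vertices, and a line joining an ideal vertex to an interior boundary point are ruled out by different flux computations, and only the last case actually invokes the strict Jenkins--Serrin inequality (via the geometric reformulation of Lemma~\ref{lem:car}); your phrase ``forces an equality $2\a(\mathcal{P}_L)=|\G(\mathcal{P}_L)|$'' conflates these. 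You also implicitly use solvability of the approximating Dirichlet problems on polygonal domains that need not be convex; this requires Perron's method with barriers along the geodesic edges rather than ``standard PDE theory'' for convex domains. These are the standard technical points settled in \cite{marr1}, not gaps in your strategy.
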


\begin{figure}
\begin{center}
\includegraphics[scale=.35]{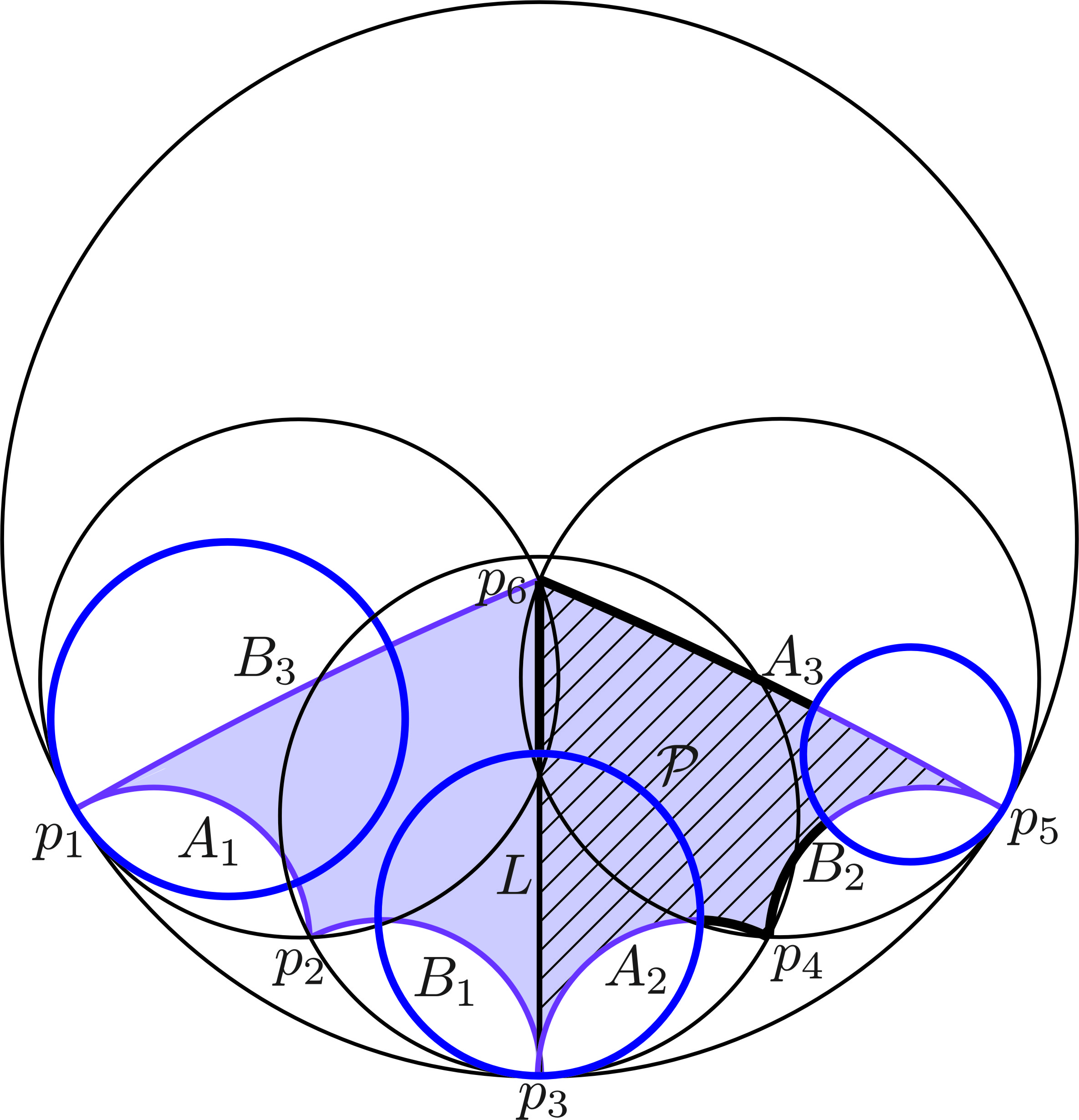}
\caption{Example of a convex semi-ideal polygonal domain satisfying
  condition $(\star)$ which is not Jenkins-Serrin, since
  $2\a(\mathcal{P})>|\Gamma(\mathcal{P})|$ because
  $|A_2\cap\Gamma(\mathcal{P})|>|L\cap\Gamma(\mathcal{P})|$.}
\label{fig:nonJS}
\end{center}
\end{figure}

We will work with convex semi-ideal polygonal domains $\O$
satisfying the following additional condition (see
Figures~\ref{fig:JScondition} and~\ref{fig:nonJS}):

\medskip

\begin{tabular}{cll}
  &  ($\star$) &  {\it There
    exists a choice of pairwise disjoint horocycles $H_{2i-1}$ at the ideal}\\
  & & {\it  vertices $p_{2i-1}\in
    \partial_\infty\htwo$ such that}\\[2mm]
  & &
  \mbox{}\hspace{2cm}$\mbox{dist}_\htwo(p_{2i-2},H_{2i-1})=
  \mbox{dist}_\htwo(p_{2i},H_{2i-1})$\\[2mm]
  & & for any $i=1,\ldots,k$, using the cyclical notation $p_0=p_{2k}$.
\end{tabular}

We remark that condition ($\star$) does not depend on the choice of
horocycles $H_{2i-1}$, and it is equivalent to the existence of a
horocycle $C_{2i-1}$ at $p_{2i-1}$ passing through
$p_{2i-2},p_{2i}$, for any $i=1,\ldots,k$. We call $D_{2i-1}$ the
component of $\htwo-C_{2i-1}$ whose only point of
$\partial_\infty\htwo$ at its infinite boundary is~$p_{2i-1}$ (i.e.
$D_{2i-1}$ is the horodisk at $p_{2i-1}$ bounded by $C_{2i-1}$), and
$\overline{D_{2i-1}}=D_{2i-1}\cup C_{2i-1}$.

Before finishing this subsection, we describe geometrically when a
semi-ideal polygonal domain with a finite number of vertices and
satisfying condition ($\star$) is a Jenkins-Serrin domain. See
Figure~\ref{fig:nonJS}.

\begin{lemma}\label{lem:car}
  Let $\O$ be a semi-ideal polygonal domain with vertices $p_1,\ldots,p_{2k}$
  cyclically ordered so that $p_{2i-1}\in\partial_\infty{\mathbb H}^2$
  and $p_{2i}\in\htwo$, for any $i=1,\ldots,k$. Suppose $\O$ satisfies
  condition ($\star$) above.  Then the following assertions are
  equivalent:
  \begin{enumerate}
  \item $\O$ is a Jenkins-Serrin domain.
  \item $p_{2j}\in\H^2-\overline{D_{2i-1}}$, for any $j$ and any
    $i\not\in\{j,j+1\}$.
  \end{enumerate}
\end{lemma}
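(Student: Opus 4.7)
The plan is to prove $(1)\Leftrightarrow(2)$ by first using $(\star)$ to make condition~(i) of Definition~\ref{Def:JS} automatic, so the problem reduces to arranging (ii). Choose horocycles $H_{2i-1}$ at a common horocyclic depth $s>0$ below $C_{2i-1}$, with $s$ small enough that they are pairwise disjoint. Since by $(\star)$ the interior vertices $p_{2i-2},p_{2i}$ lie on $C_{2i-1}$, the arcs $A_i$ and $B_{i-1}$ meet $H_{2i-1}$ at points at hyperbolic distance exactly $s$ from $p_{2i}$ and $p_{2i-2}$ respectively, whence $|A_i\cap\Gamma(\Omega)|=|B_{i-1}\cap\Gamma(\Omega)|=s$ and $\alpha(\Omega)=ks=\beta(\Omega)$. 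So $\Omega$ is Jenkins-Serrin if and only if (ii) can be arranged for some horocycle choice.

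For $(1)\Rightarrow(2)$, I argue by contraposition: suppose $p_{2j}\in\overline{D_{2i-1}}$ with $j\notin\{i-1,i\}$, and work in the upper half-plane model with $p_{2i-1}=\infty$ and $C_{2i-1}:y=1$, so that $y_{2j}\ge 1$. Take the inscribed polygonal $\mathcal{P}$ with ordered vertices $p_{2i-1},p_{2i},p_{2i+1},\dots,p_{2j-1},p_{2j}$ closed off by the chord $[p_{2j},p_{2i-1}]$, which lies in $\Omega$ by convexity. In this model the closing chord is a vertical ray cut by $H_{2i-1}:y=e^{s}$ to have length $s-\log y_{2j}$, while each $A$- or $B$-edge of $\mathcal{P}$ contributes $s$ to $\alpha$ or $\beta$ respectively. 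A direct tally gives
\[
  2\alpha(\mathcal{P})-\bigl|\Gamma(\mathcal{P})\bigr|=\log y_{2j}\ \ge\ 0,
\]
independently of the horocycle depth $s$. Thus (ii) fails for \emph{every} horocycle choice and $\Omega$ is not Jenkins-Serrin.

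For $(2)\Rightarrow(1)$, assume (2) and fix a common depth $s>0$. For any proper inscribed $\mathcal{P}$, the same ideal-vertex-by-ideal-vertex accounting as in Step~2 shows that, after the $s$-contributions from $A$-edges, $B$-edges, and the ideal-end cuts of the diagonals telescope, $2\alpha(\mathcal{P})-|\Gamma(\mathcal{P})|$ becomes a sum of intrinsic terms: each interior-to-ideal diagonal $[p_{2m},p_{2l-1}]\subset\partial\mathcal{P}$ contributes $-\tau_m^{(l)}$, where $\tau_m^{(l)}$ is the signed hyperbolic distance from $p_{2m}$ to $C_{2l-1}$ (strictly positive for non-adjacent indices by~(2)); each interior-to-interior diagonal contributes the negative of its length; and any ideal-to-ideal diagonal contributes an additional $-2s$ plus a bounded geometric residual. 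Since a proper $\mathcal{P}$ must contain at least one genuinely non-adjacent diagonal, and since the family of inscribed $\mathcal{P}$ is finite, we may choose $s$ uniformly large enough to make every $2\alpha(\mathcal{P})-|\Gamma(\mathcal{P})|$ strictly negative; the symmetric argument applied to the complementary ``backward'' arc gives $2\beta(\mathcal{P})-|\Gamma(\mathcal{P})|<0$. Hence (ii) holds and $\Omega$ is Jenkins-Serrin. The main obstacle is the combinatorial bookkeeping for diagonals with two ideal endpoints, where $(\star)$ must again be invoked to ensure the residual stays bounded in the intrinsic geometry of $\Omega$ rather than blowing up with $s$.
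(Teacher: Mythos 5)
Your reduction of condition (i) via $(\star)$ and your proof of $(1)\Rightarrow(2)$ are correct and essentially the paper's own argument: the paper takes the same inscribed domain $\mathcal{P}$ (the component of $\O-L$ containing $A_i$, where $L$ is the geodesic from $p_{2j}$ to $p_{2i-1}$) and derives $|\Gamma(\mathcal{P})|\leq 2\a(\mathcal{P})$ from $\mathrm{dist}_\htwo(p_{2j},H_{2i-1}(n))\leq n$; your half-plane computation $2\a(\mathcal{P})-|\Gamma(\mathcal{P})|=\log y_{2j}\geq 0$ is the same estimate made explicit.

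The gap is in $(2)\Rightarrow(1)$, exactly where you flag ``the main obstacle.'' Your decomposition of $2\a(\mathcal{P})-|\Gamma(\mathcal{P})|$ is asserted rather than derived, and as an identity it is false. Writing $a,b$ for the numbers of $A$- and $B$-edges of $\mathcal{P}$, $n_b$ (resp. $n_c$) for the number of diagonals with exactly one (resp. two) ideal endpoints, and $V_\infty$ for the number of ideal vertices of $\mathcal{P}$, each $A$-edge contributes $+s$ to $2\a(\mathcal{P})-|\Gamma(\mathcal{P})|$, each $B$-edge contributes $-s$, and each ideal endpoint of a diagonal contributes $-s$; since $2V_\infty=a+b+n_b+2n_c$, the coefficient of $s$ is $2(a-V_\infty)$, not $-2n_c$. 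What rescues your scheme is the incidence inequality $a\leq V_\infty-n_c$: each ideal vertex carries at most one $A$-edge, and an orientation count along $\partial\mathcal{P}$ shows that an ideal-to-ideal diagonal must occupy the ``outgoing'' slot at one of its endpoints, so that vertex cannot also carry its $A$-edge. You neither state nor prove this, and without it nothing prevents the $-2s$ of an ideal-to-ideal diagonal from being cancelled by the $+2s$ net contribution of two flanking $A$-edges, leaving only the residual $-\rho$, where $\rho$ is the signed distance between $C_{2l-1}$ and $C_{2l'-1}$ along the geodesic joining their ideal points --- and $\rho$ can be negative, since these horocycles are allowed to overlap. You would also need to observe that when the $s$-coefficient vanishes the surviving intrinsic terms are strictly positive: every genuine interior-to-ideal diagonal has $\tau>0$ by (2) (adjacent pairs are edges, not diagonals), and interior-to-interior diagonals have positive length. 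The paper sidesteps all of this by first normalizing $\mathcal{P}$ so that $\partial\mathcal{P}=A_{i_1}\cup\g_1\cup\cdots\cup A_{i_s}\cup\g_s$ with each $\g_j$ either a $B$-edge or an interior-to-ideal diagonal, and then pigeonholing: $\sum_j|\g_j\cap\Gamma(\mathcal{P})|\leq \a(\mathcal{P})=sn$ forces some interior diagonal of truncated length at most $n$, i.e. some $p_{2i_j}\in\overline{D_{2i_{j+1}-1}}$, contradicting (2). To complete your version, either prove the incidence inequality and the sign analysis above, or carry out the paper's normal-form reduction first.
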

\begin{proof}
  Before proving the lemma, let us fix some notation.  For any
  $i=1,\ldots,k$, consider the nested sequence of horocycles
  $\{H_{2i-1}(n)\}_n$ at $p_{2i-1}$ contained in $D_{2i-1}$ and
  converging to $p_{2i-1}$ as
  $n\to+\infty$, such that
  $\mbox{dist}_\htwo\left(H_{2i-1}(n), C_{2i-1}\right) =n$,
  for any~$n$.  Then
  \[
  \mbox{dist}_\htwo(p_{2i-2},H_{2i-1}(n))
  =\mbox{dist}_\htwo(p_{2i},H_{2i}(n))=n.
  \]

  Let us now prove Lemma~\ref{lem:car}.
  First suppose $\O$ is Jenkins-Serrin and there exists some
  $p_{2j}\in \overline{D_{2i-1}}$, with $i\not\in\{j,j+1\}$.  We then
  have
  \[
  \mbox{dist}_\htwo(p_{2j},H_{2i-1}(n))\leqslant n
  \]
  for $n$ large.  Let $L$ be the geodesic arc from $p_{2j}$ to
  $p_{2i-1}$, and $\mathcal{P}$ be the component of $\O-$
  containing $A_i$ on its boundary (see Figure~\ref{fig:nonJS}).
  Clearly, $\mathcal{P}$ is a polygonal domain inscribed in $\O$. And,
  for this choice of horocycles $H_{2i-1}(n)$, it holds
  $\left|A_\ell\cap\Gamma(\mathcal{P})\right|=n$ (resp.
  $\left|B_\ell\cap\Gamma(\mathcal{P})\right|=n$) for any $\ell$ such
  that $A_\ell\subset\partial{\mathcal P}$
  (resp. $B_\ell\subset\partial{\mathcal P}$). Thus
  $\beta(\mathcal{P})=\a(\mathcal{P})-n$, and then
  \[
  |\Gamma(\mathcal{P})|=\mbox{dist}_\htwo(p_{2j},H_{2i-1}(n))
  +\a(\mathcal{P})+\beta(\mathcal{P})\leqslant 2\a(\mathcal{P}).
  \]
  This holds for every $n$ large, which contradicts that $\O$ is a
  Jenkins-Serrin domain. This proves
  $\emph{(1)}\Rightarrow\emph{(2)}$.

  Now assume $p_{2j}\in\H^2-\overline{D_{2i-1}}$, for any $j$ and any
  $i\not\in\{j,j+1\}$, and let us prove that $\O$ is a Jenkins-Serrin
  domain.  As we have remarked above, we have $\a(\O)=\b(\O)$.
  Suppose there exists an inscribed polygonal domain $\mathcal{P}$ in
  $\O$, $\mathcal{P}\neq\O$, such that
  \[
  |\Gamma(\mathcal{P})| \leqslant 2\a(\mathcal{P})
  \]
  (the case $|\Gamma(\mathcal{P})| \leqslant 2\b(\mathcal{P})$ follows
  similarly).  Since $\mathcal{P}\neq\O$, there is at least an
  interior geodesic $\g_1$ in $\partial{\mathcal{P}}$ (i.e.
  $\g_1\subset\partial{\mathcal{P}}\cap\O$). We can assume there are
  no two consecutive interior geodesics $\g_1,\g_2$ in
  $\partial{\mathcal{P}}$: We would replace $\mathcal{P}$ by another
  inscribed polygonal domain satisfying the same properties as
  $\mathcal{P}$ by replacing $\g_1\cup\g_2$ by the geodesic $\g_3$
  such that $\g_1\cup\g_2\cup\g_3$ bounds a geodesic triangle
  contained in $\O$.  In a similar way, we can assume that
  \[
  \partial{\mathcal{P}}= A_{i_1}\cup\g_1\cup\ldots\cup
  A_{i_j}\cup\g_j\cup A_{i_{j+1}}\cup\ldots\cup A_{i_s}\cup\g_s ,
  \]
  where each $\g_j$ is either an interior geodesic or a $B_i$ edge,
  and at least $\g_1\subset\O$.  In particular, each $\g_j$ joins an
  even vertex $p_{2i_j}$ to an odd vertex $p_{2i_{j+1}-1}$.  (Observe
  that, when $\g_j$ is a $B_i$ edge, then $\g_j=B_{i_j}$ and
  $i_{j+1}=i_j+1$.)

  Hence $\sum_{j=1}^s |\g_j\cap\Gamma(\mathcal{P})|=
  |\Gamma(\mathcal{P})|-\a(\mathcal{P})\leqslant \a(\mathcal{P})=s n$,
  from where we deduce there must be an interior geodesic
  $\g_j\subset\partial{\mathcal{P}}$ whose length is smaller than or
  equal to $n$. But this implies the vertex $p_{2i_j}$ lies on
  $\overline D_{2 i_{j+1}-1}$ and $i_j\not\in\{i_{j+1}-1,i_{j+1}\}$, a
  contradiction.
\end{proof}

\subsection{Conjugate surfaces in $\htwo\times\R$}
\label{subsec:conjugation} In this subsection we briefly recall how to
obtain minimal surfaces in $\htwo\times\R$ by conjugation from other
known minimal examples.  For more details see Daniel~\cite[Section
4]{da2} and Hauswirth, Sa Earp and Toubiana~\cite{HST}.

Let $\Sigma$ be a 2-sided minimal surface in $\htwo\times\R$. We call
height function of $\Sigma$ to the horizontal projection
$h:\Sigma\to\R$, which is known to be a real harmonic map. And we
denote by $F:\Sigma\to\htwo$ the vertical projection, which is a
harmonic map, and by
\[
Q=\langle F_z \bar{F}_z\rangle (dz)^2
\]
the Hopf differential associated to $F$, where $z$ is a local
conformal coordinate on~$\Sigma$.  Finally, we denote by $N$ a
globally defined unit normal vector field on $\Sigma$ and by
$\nu=\langle N,\frac{\partial}{\partial t} \rangle$ the angle function
of $\Sigma$.

\begin{theorem}[\cite{da2,HST}]\label{th:conjugate}
  Let $\Sigma$ be a simply-connected minimal surface in
  $\htwo\times\R$.  There exists a minimal surface
  $\Sigma^*\subset\htwo\times\R$, called {\rm conjugate surface of
    $\Sigma$}, such that:
\begin{enumerate}
\item $\Sigma$ and $\Sigma^*$ are isometric.  (If we identify points
  in $\Sigma$ and $\Sigma^*$ via an isometry, we can assume that the
  angle function $\nu^*$, the height function $h^*$, the vertical
  projection $F^*$ of $\Sigma^*$, and the Hopf differential $Q^*$
  associated to $F^*$, are all defined on $\Sigma$.)
\item The angle functions $\nu,\nu^*$ coincide.
\item The height functions $h,h^*$ are real harmonic conjugate.
\item $Q^*=-Q$.
\end{enumerate}
The conjugate surface $\Sigma^*$ is well-defined up to an isometry
of $\H^2\times\R$. Finally, the conjugation exchanges the following
Schwarz reflections:
  \begin{itemize}
  \item The symmetry with respect to a vertical geodesic plane of
    $\H^2\times\R$ containing a geodesic curvature line of $\Sigma$ becomes the
    rotation of $\H^2\times\R$ by angle $\pi$ with respect to a horizontal geodesic
    contained in $\Sigma^*$, and viceversa.
  \item The symmetry with respect to a horizontal slice
    containing a geodesic curvature line of $\Sigma$ becomes the rotation  by angle $\pi$  with
    respect to a vertical straight line contained in $\Sigma^*$, and
    viceversa.
\end{itemize}
\end{theorem}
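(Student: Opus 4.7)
The plan is to invoke Daniel's fundamental theorem of surface theory for $\htwo\times\R$: a simply-connected isometric immersion into $\htwo\times\R$ is determined, up to ambient isometry, by the quadruple $(I,S,T,\nu)$ consisting of first fundamental form, shape operator, tangential projection of $\partial_t$, and angle function, subject to a Gauss-Codazzi-Ricci type system of integrability equations adapted to the product geometry. Once a quadruple satisfying these equations is exhibited on the abstract simply-connected Riemannian surface $\Sigma$, the immersion realizing it exists and is unique up to congruence in $\htwo\times\R$.

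I would construct the conjugate surface by putting new data on the same Riemann surface $\Sigma$: set $I^*=I$, $\nu^*=\nu$, $T^*=JT$, and $S^*=JS$, where $J$ is the almost-complex structure (rotation by $\pi/2$) on tangent planes. Minimality of $\Sigma$ (i.e.\ $\mathrm{tr}\,S=0$) is precisely what makes $JS$ symmetric and trace-free, so $S^*$ is an admissible shape operator of a minimal immersion. The substantive step is to verify that $(I^*,S^*,T^*,\nu^*)$ again satisfies the integrability system for $\htwo\times\R$: the Gauss equation is preserved because $\det S^*=\det S$, while the Codazzi equation for $S^*$ and the compatibility conditions linking $S^*,T^*,\nu^*$ follow by direct computation from the $J$-equivariance of the Levi-Civita connection together with the original equations for $(I,S,T,\nu)$. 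Daniel's existence theorem then produces the conjugate immersion $\Sigma^*$.

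Properties (1) and (2) are immediate from $I^*=I$ and $\nu^*=\nu$. For (3), note that $dh=\langle T,\cdot\rangle$, so
\[
dh^*=\langle JT,\cdot\rangle=-\langle T,J\cdot\rangle=-\ast dh,
\]
which is exactly the harmonic-conjugate relation on the Riemann surface $\Sigma$; simple-connectedness ensures that a global primitive $h^*$ of $-\ast dh$ exists. For (4), the Hopf differential of the vertical projection can be written as a complex-bilinear quadratic form built from $(S,T)$; the substitution $(S,T)\mapsto(JS,JT)$ multiplies each contribution by $i^2=-1$, giving $Q^*=-Q$.

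For the Schwarz reflection correspondence, one uses the intrinsic characterization of symmetry curves: a geodesic curvature line $\gamma\subset\Sigma$ lies in a horizontal slice of symmetry iff $\nu\equiv 0$ along $\gamma$ and $T$ is tangent to $\gamma$, and lies in a vertical totally geodesic plane of symmetry iff $T$ is orthogonal to $\gamma$ within $\Sigma$. The substitution $T\mapsto JT$ exchanges these two conditions, converting horizontal-slice symmetry curves of $\Sigma$ into vertical straight lines of $\Sigma^*$ (axes of $\pi$-rotations of $\htwo\times\R$) and vertical-plane symmetry curves into horizontal geodesics of $\Sigma^*$, as claimed. The main obstacle in the whole argument is the integrability verification of the dual quadruple; once that computation is carried out the rest is a matter of unwinding the definitions.
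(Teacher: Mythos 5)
The paper itself offers no proof of this theorem: it is quoted from Daniel~\cite{da2} (Section 4) and Hauswirth--Sa Earp--Toubiana~\cite{HST}, and your proposal is essentially a sketch of Daniel's actual argument --- encode the immersion by the compatible quadruple $(I,S,T,\nu)$, pass to $(I,JS,JT,\nu)$ using minimality (trace-freeness) to keep $JS$ symmetric, and invoke the fundamental theorem for surfaces in $\H^2\times\R$ on the simply-connected $\Sigma$ to integrate the new data. The verifications you outline for items (1)--(4) are correct; in particular $dh=\langle T,\cdot\rangle$ together with $T\mapsto JT$ gives the harmonic-conjugacy of the heights, and $Q=-(h_z)^2(dz)^2$ (from conformality) then yields $Q^*=-Q$.

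One local slip: your intrinsic characterization of the two kinds of symmetry curves is swapped. Along a curve $\gamma$ where $\Sigma$ meets a horizontal slice orthogonally, $h$ is constant, so $\langle T,\gamma'\rangle=dh(\gamma')=0$ and in fact $T=\partial_t$, $\nu=0$: there $T$ is \emph{orthogonal} to $\gamma$. Along a curve in a vertical totally geodesic plane of symmetry, the tangent plane of $\Sigma$ is spanned by $\gamma'$ and the horizontal unit normal to that plane, which forces $T$ to be \emph{tangent} to $\gamma$. Since $J$ exchanges the two conditions in either labelling, the pairing you ultimately state (vertical-plane symmetry curves of $\Sigma$ go to horizontal geodesics of $\Sigma^*$, horizontal-slice symmetry curves go to vertical lines of $\Sigma^*$) is the correct one, but as written your intermediate claim contradicts your conclusion; correct the characterization and the step closes. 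The only substantive point left unverified --- that $(I,JS,JT,\nu)$ satisfies the Codazzi and compatibility equations --- is exactly the computation carried out in~\cite{da2}, and you rightly flag it as the crux, so as a proof sketch the proposal is sound.
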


We will use the above correspondence to study the conjugate surface
of a minimal graph defined on a convex semi-ideal polygonal domain
of $\htwo$.  The surface constructed in this way is a minimal graph
(and consequently embedded), as ensured by the following Krust-type
theorem.

\begin{theorem}[\cite{HST}]\label{th:krust}
  If $\Sigma$ is a minimal graph over a convex domain $\O$ of $\htwo$,
  then $\Sigma^*$ is also a minimal graph over a (non-necessarily
  convex) domain $\O^*\subset\htwo$.
\end{theorem}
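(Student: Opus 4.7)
The plan is to show (i) that $F^*:\Sigma^*\to\htwo$ is a local diffeomorphism and (ii) that $F^*$ is globally injective. Together these imply that $\Sigma^*$ is a vertical graph over $\O^*:=F^*(\Sigma^*)\subset\htwo$, with graph function $h^*\circ(F^*)^{-1}$.

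For (i), since $\Sigma$ is a graph over $\O$ its angle function $\nu$ is nowhere zero. By parts (1)--(2) of Theorem~\ref{th:conjugate}, the angle function $\nu^*$ of $\Sigma^*$ equals $\nu$ under the isometric identification $\Sigma\cong\Sigma^*$, so $\nu^*$ is nowhere zero and $F^*$ is an immersion, hence a local diffeomorphism. Moreover, $\O$ is convex and so simply connected, whence $\Sigma$ is simply connected; by the isometry in Theorem~\ref{th:conjugate}(1) so is $\Sigma^*$.

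For (ii), identify $\Sigma$ and $\Sigma^*$ and consider $\Phi:=F^*\circ F^{-1}:\O\to\htwo$; the task reduces to showing $\Phi$ is injective. Suppose, for contradiction, $\Phi(x_0)=\Phi(x_1)$ with distinct $x_0,x_1\in\O$. By convexity of $\O$, the hyperbolic geodesic segment $\sigma\subset\O$ from $x_0$ to $x_1$ is entirely contained in $\O$, so $\Phi\circ\sigma$ is a closed loop in $\htwo$. The contradiction comes from the Cauchy--Riemann-type relations forced by parts (3)--(4) of Theorem~\ref{th:conjugate}: in a local conformal parameter on $\Sigma$, the fact that $h^*$ is the harmonic conjugate of $h$ together with $Q^*=-Q$ places the pair $(F,F^*)$ in the associated family of harmonic maps into the K\"ahler target $\htwo$. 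In the Euclidean analogue $\R^2$ this amounts to the pointwise identity $dF^*=J\circ dF$, with $J$ the $\pi/2$-rotation of the target; pairing this identity with a well-chosen direction along $\sigma$ produces a strictly signed derivative for a real-valued function along $\Phi\circ\sigma$, contradicting closedness of the loop.

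The main obstacle is executing this Krust-type argument rigorously in the non-flat target $\htwo$. In $\R^3$ the relation $dX_H^*=J\,dX_H$ can be integrated trivially because the complex structure on $\R^2$ is parallel, so closedness of $\Phi\circ\sigma$ immediately forces $J(x_1-x_0)=0$, which is absurd. In $\htwo$ the rotation $J$ is only defined pointwise in the target and varies from point to point, so one must work at the level of the harmonic-map associated family and replace the straight-line integration by a comparison argument along $\sigma$ using parallel transport or the exponential map at $\Phi(x_0)$; this is the scheme carried out by Hauswirth--Sa Earp--Toubiana in~\cite{HST}. The resulting $\O^*$ need not be convex because $\Phi$ does not preserve hyperbolic geodesics, which is why the statement only asserts that $\O^*$ is an open subset of $\htwo$.
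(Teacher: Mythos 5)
The paper itself offers no proof of this theorem --- it is quoted verbatim from \cite{HST} --- so your argument has to stand on its own, and it does not. Part (i) is fine: nonvanishing of $\nu=\nu^*$ makes $F^*$ a local diffeomorphism, and simple connectedness of $\O$ passes to $\Sigma$ and $\Sigma^*$. But part (ii), which is the entire content of a Krust-type theorem, is never actually carried out, and the mechanism you describe is wrong even in the Euclidean model you appeal to. The ``pointwise identity $dF^*=J\circ dF$'' is false: for the catenoid--helicoid pair one has $F=\cosh u\,(\cos v,\sin v)$ and $F^*=\sinh u\,(\sin v,-\cos v)$, so $F_u=\sinh u\,(\cos v,\sin v)$ while $F^*_u=\cosh u\,(\sin v,-\cos v)$; these do not even have the same length, so no rotation $J$ of the target relates them, and a small graphical piece of the catenoid over a convex disk already refutes the identity. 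Indeed, if $dF^*=J\circ dF$ held with $J$ a fixed rotation of $\R^2$, then $F^*=J\circ F+c$ and $\O^*$ would be congruent to $\O$, making convexity irrelevant and contradicting the very statement that $\O^*$ need not be convex. The genuine Euclidean Krust argument is an \emph{integral} one: for $x_0\neq x_1$ one integrates along the segment $\sigma$ a quantity such as $\langle dF^*(\sigma'(t)),w\rangle$ for a fixed direction $w$ determined by $x_1-x_0$, and uses convexity of $\O$ together with the conformality relations to show the integrand has a strict sign; closedness of $\Phi\circ\sigma$ is contradicted only through this signed integral, not through a pointwise rotation.

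For the hyperbolic target the situation is worse: you explicitly write that the needed comparison ``is the scheme carried out by Hauswirth--Sa Earp--Toubiana in \cite{HST},'' which is a citation of the result you are supposed to be proving, not a proof of it. What is missing is the actual monotonicity computation: parametrize the geodesic $\sigma\subset\O$ from $x_0$ to $x_1$ (this is the only place convexity enters), choose a suitable Busemann or signed-distance function $d$ to a complete geodesic of $\htwo$ adapted to the endpoints, and show, using $Q^*=-Q$ and the harmonic conjugacy of $h$ and $h^*$ to express $dF^*$ along $\sigma$ in terms of $dF$, that $t\mapsto d\bigl(F^*(F^{-1}(\sigma(t)))\bigr)$ has a strictly signed derivative. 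Because the rotation of $T\htwo$ is only defined pointwise, this step requires a genuine computation with the harmonic map equation (or with parallel transport along $F\circ F^{-1}\circ\sigma$), and none of it appears in your write-up. As it stands the injectivity of $F^*$, and hence the theorem, is asserted rather than proved.
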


\subsection{Minimal $k$-noids of $\htwo\times\R$}
\label{subsec:semi-ideal} In this subsection we briefly explain the
construction of the properly embedded minimal surfaces of
$\H^2\times\R$ given in~\cite{moro1,pyo1}, which have genus zero,
$k\geq 2$ vertical planar ends and finite total (intrinsic) curvature
$4\pi(1-k)$. We call them minimal $k$-noids of $\htwo\times\R$.

Let $\O$ be a convex Jenkins-Serrin semi-ideal polygonal domain with
$2k$ vertices $p_1, \ldots, p_{2k}$, cyclically ordered, so that the
even vertices $p_{2i}$ are located in the interior of $\htwo$, and
the odd vertices $p_{2i-1}$ are at $\partial_\infty{\mathbb H}^2$,
for $i=1,\ldots,k$. We call $A_i$ the edge of $\O$ whose endpoints
are $p_{2i-1},p_{2i}$, and $B_i$ the edge of $\O$ whose endpoints
are $p_{2i},p_{2i+1}$.  We also require that $\O$ satisfies the
condition~($\star$) defined in Subsection~\ref{subsec:graphs}.

By Theorem~\ref{th:JS}, there exists a unique solution $u$ to the
minimal graph equation~\eqref{eq.min.surf} defined over $\O$ with
boundary values $+\infty$ on $A_i$ and $-\infty$ on $B_i$ such that
$u(p_0)=0$, for some fixed point $p_0\in\Omega$.  Denote by $\Sigma$
the graph surface of $u$; $\Sigma$ is bounded by the $k$ vertical
straight lines $\Gamma_i=\{p_{2i}\}\times\R$, $i=1,\ldots,k$.

\begin{figure}
\begin{center}
\includegraphics[scale=.3]{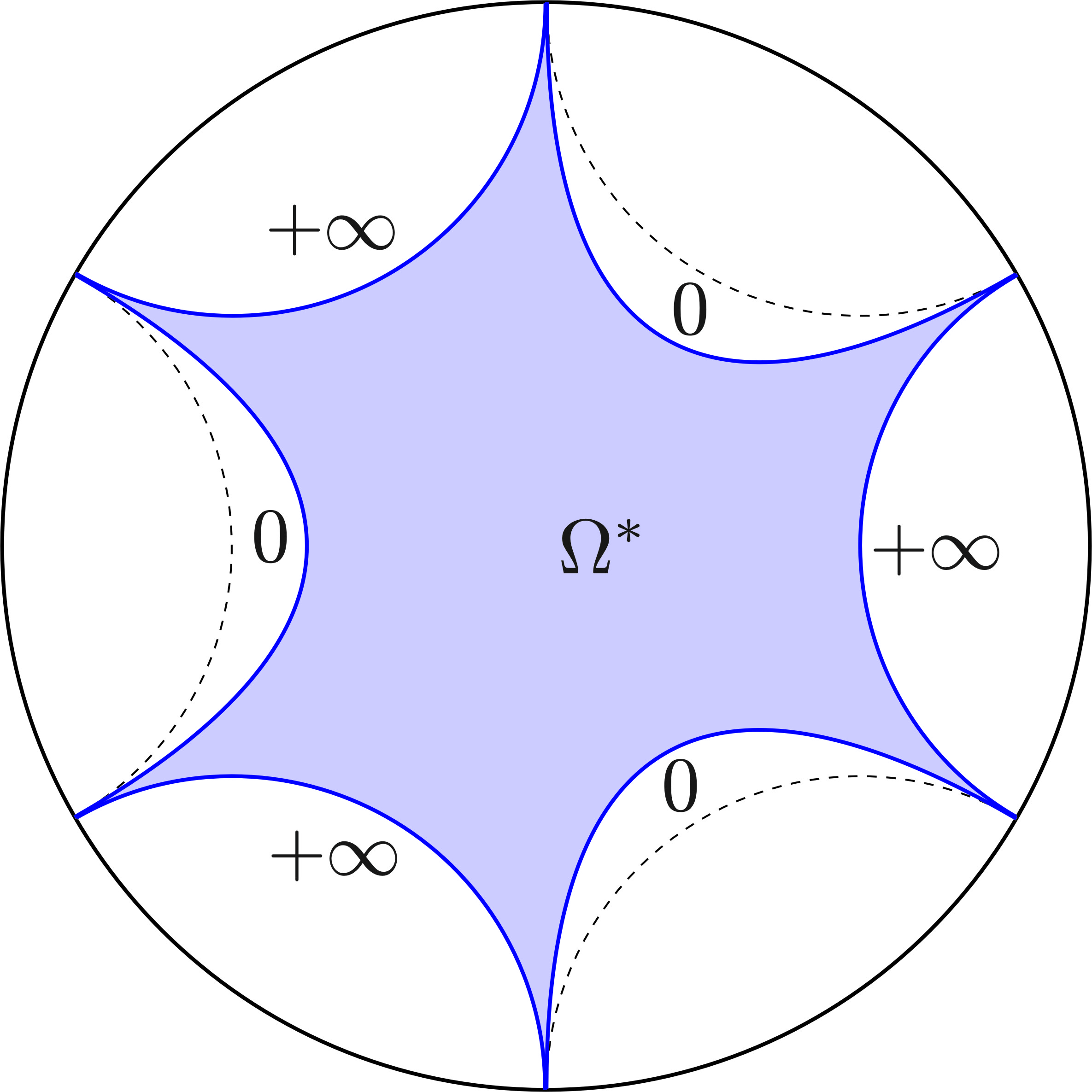}
\caption{Vertical projection of the conjugate surface $\Sigma^*$ in
a symmetric case.}
\label{fig:conjugate}
\end{center}
\end{figure}

The conjugate surface $\Sigma^*$ of $\Sigma$ is a minimal graph over
a (non-necessarily convex) domain $\O^*\subset\htwo$, by
Theorem~\ref{th:krust} (see Figure~\ref{fig:conjugate}). And
$\partial\Sigma^*$ consists of $k$ horizontal geodesic curvature
lines $\Gamma_i^*$. In~\cite{moro1} it is proved that
$\Gamma_i^*\subset\htwo\times\{0\}$ for any $i$ and that $\Sigma^*$
is contained in one of the half-spaces determined by
$\htwo\times\{0\}$. By reflecting $\Sigma^*$ with respect to
$\htwo\times\{0\}$, we get a properly embedded minimal surface $M$
with genus zero and $k$ vertical planar ends, which has total
(intrinsic) curvature $4\pi(1-k)$. The ends of $M$ are asymptotic to
the vertical geodesic planes $\eta_i^*\times\R$, where the
$\eta_i^*$ are the complete geodesics such that
$\partial\O^*=\G_1^*\cup\eta_1^*\cup\ldots\cup \G_k^*\cup\eta_k^*$
(cyclically ordered).

\section{Proof of Theorem~\ref{th:finite}: examples with $m_0$ limit
  ends}
\label{sec:finite}

Firstly, let us recall some definitions. A limit end $e$ of a
non-compact surface $M$ is an accumulation point of the set
$\mathcal{E}(M)$ of ends of $M$. This makes sense since
$\mathcal{E}(M)$ can be endowed with a natural topology for which it
is a compact, totally disconnected subspace of the real interval
$[0,1]$. See~\cite{mpe1} for more details. We call simple ends of $M$
to its non-limit ends.

Assume the simple ends of $M$ are asymptotic to vertical geodesic
planes (called vertical planar ends) which can be ordered
cyclically\footnote[2]{The surface $M$ we want to construct will be
  obtained as a limit of minimal $k$-noids, and it is got by
  reflection symmetry from a minimal graph with boundary values
  $0,+\infty$, alternately. The vertical projection of $M$ will be
  bounded by strictly concave curves $\Gamma_i^*$ and geodesic curves
  $\eta_i^*$, disposed alternately and asymptotic at
  $\partial_\infty\htwo$. The ends of $M$ will be asymptotic to the
  vertical geodesic planes $\eta_i^*\times\R$, which are cyclically
  ordered.}, fixed an orientation of $\partial_\infty\htwo$. If the
limit end can be obtained as accumulation of simple ends ordered
following the negative orientation (resp. the positive orientation)
but it cannot be obtained as accumulation of simple ends ordered
following the positive orientation (resp.  negative orientation), we
will say that it is a {\it left} (resp.  {\it right}) limit end. In
other case, we will say that it is a {\it 2-sided} limit end.

In this section we construct properly embedded minimal surfaces in
$\htwo\times\R$ with genus zero, infinitely many vertical planar ends
and $m_0$ limit ends, for any $m_0\geq 1$. Furthermore, we can
prescribe the behavior of the limit ends; more precisely, if we denote
by $E_\infty^1,\ldots,E_\infty^{m_0}$ (cyclically ordered) the limit
ends of such a surface, we can prescribe if each $E_\infty^m$ is
either a left, a right or a 2-sided limit end.

Now let us explain our construction. In a first step we will
construct, by taking limits of convex Jenkins-Serrin semi-ideal
polygonal domains $\O_k$ with finitely many vertices, a convex
semi-ideal polygonal domain $\O_\infty$ with an infinite countable set
$\mathcal{S}$ of ideal vertices and ${m_0}$ limit points
$p_\infty^1,\ldots,p_\infty^{m_0}$ (cyclically ordered) such that, if
$E_\infty^m$ is prescribed to be a left (resp. a right or a 2-sided)
limit end, then $p_\infty^m$ is a left (resp. a right or a 2-sided)
limit point, see Definition~\ref{def:limit} below. We will say that
such a limit point $p_\infty^m$ is a {\it left} (resp. {\it a right or
  a 2-sided}) {\it limit ideal vertex} of $\O_\infty$.

\begin{definition}\label{def:limit}
  {\rm Let $\mathcal{S}$ be a set of points in
    $\partial_\infty\htwo=\{y=0\}$. We will say that
    $p_\infty\in\mathcal{S}$ is a {\rm limit point} if every
    neighborhood of $p_\infty$ in $\partial_\infty\htwo$ contains a
    point of $\mathcal{S}$ other than $p_\infty$ itself; i.e. if
    $p_\infty=(x_\infty,0)$, then $\mathcal{S}\cap\{y=0,\
    0<|x-x_\infty|<\ve\}\neq\emptyset$ for every $\ve>0$; or
    $p_\infty=\infty$ and $\mathcal{S}\cap\{y=0,\
    |x|>M\}\neq\emptyset$ for every $M>0$.

    A limit point $p_\infty=(x_\infty,0)$ of $\mathcal{S}$ is said to
    be a {\rm left} (resp. a {\rm right}) limit point if there exists
    some $\ve>0$ such that $\mathcal{S}\cap
    \{-\ve<x-x_\infty<0\}=\emptyset$ (resp. $\mathcal{S}\cap
    \{0<x-x_\infty<\ve\}=\emptyset$); and it is said to be a {\rm
      2-sided} limit point in other case.

    If $p_\infty=\infty$ is a limit point of $\mathcal{S}$, we say
    that it is a {\rm left} (resp. a {\rm right}) limit point if there
    exists some $M>0$ such that $\mathcal{S}\cap \{x>M\}=\emptyset$
    (resp.  $\mathcal{S}\cap \{x<-M\}=\emptyset$); and it is a {\rm
      2-sided} limit point in other case. }
\end{definition}

Next we will get a Jenkins-Serrin minimal graph $\Sigma$ over
$\O_\infty$ as a limit of Jenkins-Serrin minimal graphs over the
$\O_k$ domains. Finally, we will prove that the conjugate surface of
$\Sigma$ is a minimal graph $\Sigma^*\subset\htwo\times[0,+\infty)$
whose boundary, which consists of horizontal geodesic curvature lines,
is contained in the horizontal slice $\htwo\times\{0\}$. The desired
surface is obtained from $\Sigma^*$ by reflection symmetry about
$\htwo\times\{0\}$.

\subsection{Construction of the domains}
\label{subsec:finite}
This subsection deals with the construction of the convex semi-ideal
polygonal domain $\O_\infty$ in the argument explained above. We will
construct a sequence of convex Jenkins-Serrin semi-ideal polygonal
domains $\O_k$ satisfying condition $(\star)$ defined in
Subsection~\ref{subsec:graphs}, each $\O_k$ with a finite number of
vertices, with $\O_k\subset\O_{k+1}$ for any $k$, and such that they
converge to a domain $\O_\infty$ in the desired conditions.

Consider $m_0$ different ideal points in $\partial_\infty\htwo$:
\[
p^1_\infty=\infty,\ p^2_\infty=(x_\infty^2,0),\ldots,\
p^{m_0}_\infty=(x_\infty^{m_0},0) ,
\]
with $-\infty<x_\infty^2<\ldots<x_\infty^{m_0}<+\infty$, when $m_0\geq
2$; in the case $m_0=1$, we only have $p_\infty^1=\infty$.  These
points will be the limit ideal vertices of $\O_\infty$.

We call $\mathcal{M}=\{ m\in\N\ |\ 1\leq m\leq m_0\}$. For any
$m\in\mathcal{M}$, choose two ideal points $p_{-1}^m=(x_{-1}^m,0),\
p_{1}^m=(x_{1}^m,0)$ with
$x_\infty^{m}<x_{-1}^m<x_{1}^m<x_\infty^{m+1}$, where
$x_\infty^{1}=-\infty$ and $x_\infty^{m_0+1}=+\infty$.

\begin{figure}
  \begin{center}
    \includegraphics[scale=.4]{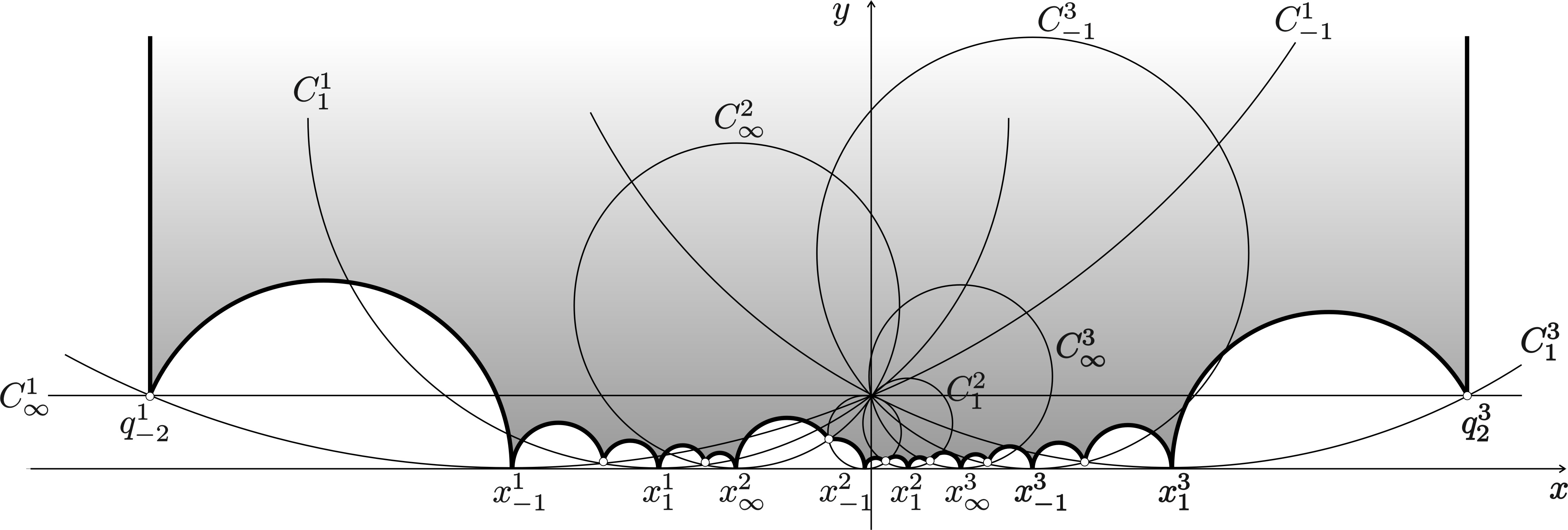}
    \caption{The shadowed region is an example of $\O_1$ for three
      limit ideal vertices, where ${p^1_\infty=\infty}$ and
      $p^m_i=(x^m_i,0)$ for the remaining values of $i,m$. The
      interior vertices in white correspond, from the left to the
      right, to
      $q_{-2}^1,p_0^1,q_2^1,q_{-2}^2,p_0^2,q_2^2,q_{-2}^3,p_0^3,q_2^3$.}
    \label{fig:Omega1_centrado}
  \end{center}
\end{figure}

For any $m\in\mathcal{M}$ and any $j\in\{-1,1,\infty\}$, we call
$C^m_j$ the horocycle at $p^m_j$ passing through $P_0=(0,1)$.  Denote
by $p_0^m$ (resp. $q_{-2}^m,\ q_2^m$) the point in $C_{-1}^m\cap
C_1^m$ (resp. $C_\infty^{m}\cap C_{-1}^m$, $C_1^m\cap C_\infty^{m+1}$)
which is different from $P_0$, see
Figure~\ref{fig:Omega1_centrado}. We define $\O_1$ as the semi-ideal
polygonal domain with set of vertices
\[
\{p_\infty^m, q_{-2}^m, p_{-1}^m, p_0^m, p_1^m, q_2^m\ |\
m\in\mathcal{M}\}.
\]
By definition of $q_{-2}^m,p_0^m,q_2^m$, it is clear that $\O_1$
satisfies condition~$(\star)$. Now let us see we can choose the ideal
vertices $p_{-1}^m, p_{1}^m$ to assure $\O_1$ is convex. It suffices
to choose appropriately $x_{-1}^m,x_{1}^m$ such that
$q_{-2}^m,p_0^m,q_2^m\subset\{0<y<1\}$, except for $q_{-2}^1,
q_2^{m_0}\in \{y=1\}$.
\begin{itemize}
\item If $x_\infty^{m}\geq 0$ or $x_\infty^{m+1}\leq 0$, then
  $x_{-1}^m,x_{1}^m$ can be chosen arbitrarily.
\item In the case $x_\infty^{m}< 0<x_\infty^{m+1}$, we take
  $\max\{x_\infty^{m},-1\}< x_{-1}^m<0< x_{1}^m<
  \min\{x_\infty^{m+1},1\}$.
\end{itemize}
With the choice above, the domain $\O_1$ is convex. Finally, let us
check that $\O_1$ is a Jenkins-Serrin domain. Using
Lemma~\ref{lem:car} it suffices to get that $p_{0}^m$ (resp.
$q_{-2}^m$; $q_{2}^m$) lies outside $C_{j}^{m'}$, for any
$j\in\{-1,1,\infty\}$ and any $m'\in\mathcal{M}$ such that $C_{j}^{m'}$
is different from $C_{-1}^m$, $C_{1}^m$ (resp. $C_{\infty}^m$,
$C_{-1}^m$; $C_{1}^m$, $C_{\infty}^{m+1}$). By the choice above, this
is the case when $C_{j}^{m'}=C_{\infty}^{1}$. Let us assume
$C_{j}^{m'}\neq C_{\infty}^{1}$. We prove it for $p_{0}^m$ (for
$q_{-2}^m$ $q_{2}^m$ it can be obtained similarly): If we denote
$p_0^m=(x_0^m,y_0^m)$, we have $x_{-1}^m<x_0^m<x_{1}^m$. If
$x_{j}^{m'}>x_{1}^m$ (resp. $x_{j}^{m'}<x_{-1}^m$), then $C_{j}^{m'}$
divides $C_{1}^m$ (resp. $C_{-1}^m$) in two components, one of them
containing both $p_{0}^m$ and $p_{1}^m$ (resp.  $p_{-1}^m$). That says
that $p_{0}^m$ is outside $C_{j}^{m'}$.  \medskip

Now we consider the subsets of $\mathcal{M}$ given by
\[
\mathcal{M}^+=\{ m\in\mathcal{M}\ |\ E_\infty^{m+1}\ \mbox{ is
prescribed to be either a right or a 2-sided limit end}\},
\]
\[
\mathcal{M}^-=\{ m\in\mathcal{M}\ |\ E_\infty^{m}\ \mbox{ is
prescribed to be either a left or a 2-sided limit end}\}.
\]
For any $k\geq 2$, we define $\O_k$ as the semi-ideal polygonal
domain with set of vertices $\mathcal{V}_k^-\cup \mathcal{V}^0\cup
\mathcal{V}_k^+$, where
\[
\mathcal{V}_k^-=\{q_{-2k}^m,p_{1-2k}^m,p_{2-2k}^m,\ldots,
p_{-3}^m,p_{-2}^m\ |\ m\in\mathcal{M}^-\} \cup \{q_{-2}^m\ |\
m\in\mathcal{M}-\mathcal{M}^-\},
\]
\[
\mathcal{V}_k^0= \{p_\infty^m,p_{-1}^m,p_0^m,p_1^m\ |\
m\in\mathcal{M}\}  ,
\]
\[
\mathcal{V}_k^+=\{p_{2}^m,p_{3}^m,\ldots,
p_{2k-2}^m,p_{2k-1}^m,q_{2k}^m\ |\ m\in\mathcal{M}^+\} \cup
\{q_{2}^m\ |\ m\in\mathcal{M}-\mathcal{M}^+\}.
\]
and the vertices $p_{\pm i}^m,q_{\pm 2k}^m$ are defined by induction
as follows:

\begin{enumerate}
\item Suppose that $m\in\mathcal{M}^+$ and that we have defined the
  ideal vertices
  \[
  p_1^m=(x_1^m,0),\ \ldots,\ p_{2k-1}^m=(x_{2k-1}^m,0),
  \]
  with $k\geq 1$ and $x_{1}^m<\ldots<x_{2k-1}^m<x_\infty^m$. These
  ideal vertices determine the following data: For $1\leq i\leq k$,
  \begin{itemize}
  \item let $C_{2i-1}^m$ be the horocycle at $p_{2i-1}^m$ passing
    through $P_0$;
  \item $p_{2i-2}^m$ is defined as the intersection point in
    $C_{2i-3}^m\cap C_{2i-1}^m$ different from $P_0$;
  \item $q_{2i}^m$ is the intersection point in $C_{2i-1}^m\cap
    C_\infty^{m+1}$ different from $P_0$.
  \end{itemize}
  This choice of $p_{2i-2}^m,q_{2i}^m$ will assure that $\O_k$ is a
  convex Jenkins-Serrin semi-ideal polygonal domain which satisfies
  condition~$(\star)$.

  Let us now define $p_{2k+1}^m$. We call $\Gamma_{2k}^m$ (resp.
  $\gamma_{2k}^m$) the complete geodesic curve with
  endpoint~$p_{2k-1}^m$ (resp. $p^{m+1}_\infty$) passing through
  $q_{2k}^m$. Let $(a_{2k}^m,0)$ (resp. $(b_{2k}^m,0)$) be the
  endpoint of $\Gamma_{2k}^m$ (resp. $\gamma_{2k}^m$) different
  from~$p_{2k-1}^m$ (resp. $p^{m+1}_\infty$). We take
  $p_{2k+1}^m=(x_{2k+1}^m,0)$ satisfying $b_{2k}^m\leq x_{2k+1}^m\leq
  a_{2k}^m$. We consider that property for $p_{2k+1}^m$ in order to
  get $\O_k\subset\O_{k+1}$.

  We remark that both $p_{2k+1}^m$ and $p^m_{2k}$ converge to
  $p_\infty^{m+1}$ as $k\to +\infty$.

  \begin{figure}
    \begin{center}
      \includegraphics[scale=0.8]{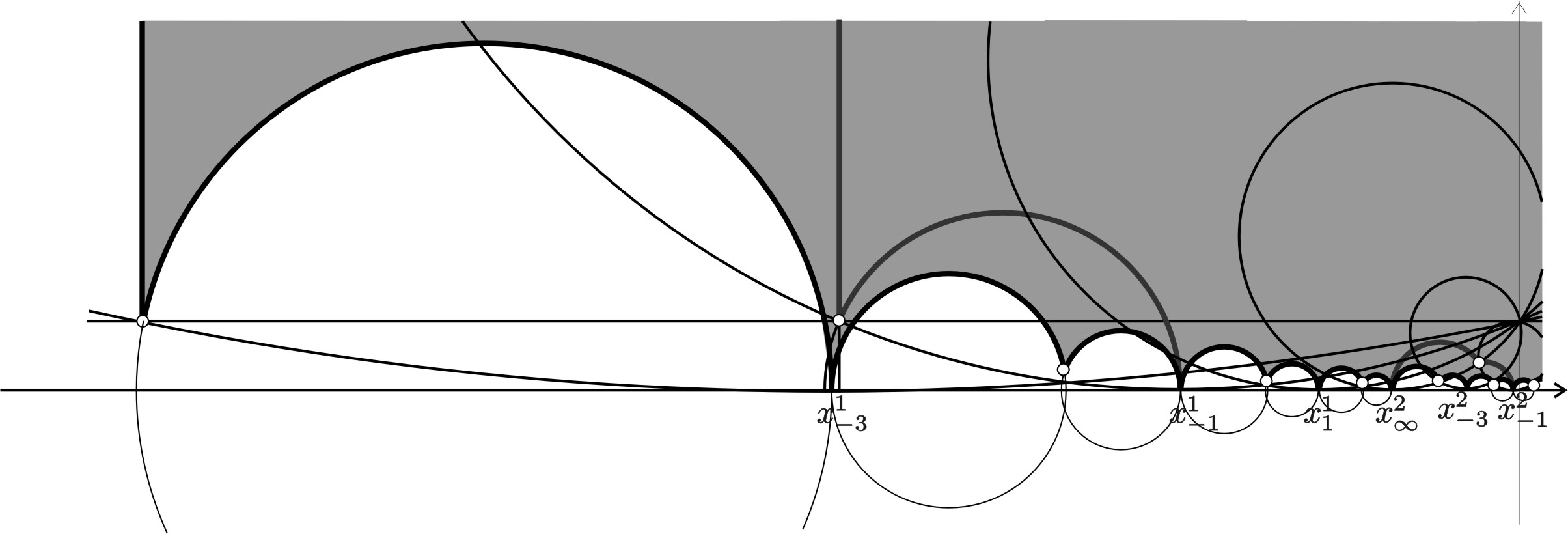}
      \caption{The shadowed region is a piece of $\O_2$, with
        $1\in\mathcal{M}^- -\mathcal{M}^+$ and
        $2\in\mathcal{M}^-$.} \label{fig:finito}
    \end{center}
  \end{figure}

\item The corresponding definition for $m\in\mathcal{M}^-$ follows
  analogously: Suppose $m\in\mathcal{M}^-$ and that, for $k\geq 1$, we
  have defined the ideal vertices
  \[
  p_{1-2k}^m=(x_{1-2k}^m,0),\ \ldots,\ p_{-1}^m=(x_{-1}^m,0) ,
  \]
  with $x_\infty^{m}<x_{1-2k}^m<\ldots<x_{-1}^{m}$. These ideal
  vertices determine the following data: For $1\leq i\leq k$,
  \begin{itemize}
  \item let $C_{1-2i}^m$ be the horocycle at $p_{1-2i}^m$ passing
    through $P_0$;
  \item $p_{2-2i}^m$ is defined as the intersection point in
    $C_{1-2i}^m\cap C_{3-2i}^m$ different from $P_0$;
  \item $q_{-2i}^m$ is the intersection point in $C_\infty^{m}\cap
    C_{1-2i}^m$ different from $P_0$.
  \end{itemize}

  Let us now define $p_{-1-2k}^m$.  We call $\Gamma_{-2k}^m$ (resp.
  $\gamma_{-2k}^m$) the complete geodesic curve with
  endpoint~$p_{1-2k}^m$ (resp. $p^{m}_\infty$) passing through
  $q_{-2k}^m$. Let $(a_{-2k}^m,0)$ (resp. $(b_{-2k}^m,0)$) be the
  endpoint of $\Gamma_{-2k}^m$ (resp. $\gamma_{-2k}^m$) different
  from~$p_{1-2k}^m$ (resp. $p^{m}_\infty$). We choose
  $p_{-1-2k}^m=(x_{-1-2k}^m,0)$, with $b_{-2k}^m\leq x_{-1-2k}^m\leq
  a_{-2k}^m$. With this choice of ideal vertices, we get
  $\Omega_k\subset\Omega_{k+1}$ and that both $p_{-1-2k}^m$ and
  $p_{-2k}$ converges to $p_\infty^{m}$ as $k\to +\infty$.
\end{enumerate}

By definition of the interior vertices $p_{2i-2}^m,q_{2i}^m$, the
semi-ideal polygonal domain $\O_k$ satisfies condition~$(\star)$. As
$x_{2k-1}^m$ has the same sign as $x_{2k-3}^m$ and $x_\infty^{m+1}$,
then $p_{2i-2}^m,q_{2i}^m\subset\{0<y<1\}$. That fact assures that
$\O_k$ is convex. Moreover, as the horocycles $C_j^{m'}$ can be
ordered from the left to the right and they all pass through $P_0$, we
can deduce (as in the case of $\O_1$) that the interior vertices
$p_{2i}^m$, $q_{-2k}^m$, $q_{2k}^m$ are outside the horocycles
$C_j^{m'}$, except for those used for defining them (i.e. their
consecutive ones). Then $\O_k$ is a Jenkins-Serrin domain, by
Lemma~\ref{lem:car}.

Finally, we have defined the ideal vertices $p_{2i-1}^m$ to get
$\O_k\subset\O_{k+1}$; for instance, when $m\in\mathcal{M}^+$ and $k$
is positive, the geodesics $A^m_k,B^m_k,\widetilde A^m_{k+1}$ do not
intersect $\widetilde A^m_k$, where $A^m_k$ (resp. $B^m_k; \widetilde
A^m_{k+1}; \widetilde A^m_k$) is defined as the geodesic arc joining
$p^m_{2k-1},p^m_{2k}$ (resp. $p^m_{2k},p^m_{2k+1}$; $p^m_{2k+1},
q^m_{2k+2}$; $p^m_{2k-1}, q^m_{2k}$).

\medskip

Let $\O_\infty$ be the semi-ideal polygonal domain with set of
vertices $\mathcal{V}_\infty^-\cup \mathcal{V}^0\cup
\mathcal{V}_\infty^+$, where
\[
\mathcal{V}_\infty^-=\{p_{-2k}^m,p_{1-2k}^m\ |\ m\in\mathcal{M}^-,\
k\in\N\} \cup \{q_{-2}^m\ |\ m\in\mathcal{M}-\mathcal{M}^-\},
\]
\[
\mathcal{V}_\infty^+=\{p_{2k-1}^m,p_{2k}^m\ |\ m\in\mathcal{M}^+,\
k\in\N\} \cup \{q_{2}^m\ |\ m\in\mathcal{M}-\mathcal{M}^+\}.
\]
It is clear that $\O_k\to\O_\infty$ as $k\to+\infty$. We can deduce,
arguing as above for $\O_k$, that $\O_\infty$ is a convex semi-ideal
polygonal domain verifying condition~$(\star)$ (the same condition
can be defined for the case of infinitely many vertices).

Since $p_{2k-1}\to p_\infty^m$ as $k\to +\infty$ when
$m\in\mathcal{M}^+$, and $p_{2k-1}\to p_\infty^{m-1}$ as $k\to
-\infty$ when $m\in\mathcal{M}^-$, then each $p^m_\infty$ is a limit
ideal vertex of $\O_\infty$ and it is:
\begin{itemize}
\item  left when $m\in{\cal M}^-$ and $m-1\not\in{\cal M}^+$;
\item  right when $m\not\in{\cal M}^-$ and $m-1\in{\cal M}^+$;
\item or 2-sided when $m\in{\cal M}^-$ and $m-1\in{\cal M}^+$.
\end{itemize}

\subsection{Construction of the Jenkins-Serrin minimal graphs}
Let $\Omega_k,\O_\infty$ be the domains constructed above.  We call
$A^m_i$ (resp. $B^m_i$) the geodesic arc joining $p^m_{2i-1},p^m_{2i}$
(resp.  $p^m_{2i},p^m_{2i+1}$), when they are defined; and $\widetilde
A^m_k$ (resp. $\widetilde B^m_k$; $\widetilde A^m_{-k}$; $\widetilde
B^m_{-k}$) the geodesic arc joining $p^m_{2k-1}, q^m_{2k}$
(resp. $q^{m}_{2k},p^{m+1}_\infty$; $p^m_\infty,q^m_{-2k}$;
$q^{m}_{-2k},p^m_{1-2k}$). 

By Theorem~\ref{th:JS}, there exists a solution $u_k$ (unique up to
an additive constant) for the minimal graph
equation~\eqref{eq.min.surf} in $\O_k$ with boundary values
$+\infty$ (resp. $-\infty$) on edges $A^m_i, \widetilde A^m_{k},
\widetilde A^m_{-k}$ (resp. $B^m_i, \widetilde B^m_{k}, \widetilde
B^m_{-k}$) which lie on $\partial\O_k$.

Fix a point $P\in\O_1$. We translate vertically the Jenkins-Serrin
graphs so that ${u_k(P)=0}$, for any $k$.

\begin{lemma}\label{lem:lines}
  The sequence $\{u_k\}_k$ has no divergence lines.
\end{lemma}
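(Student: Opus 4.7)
My plan is to argue by contradiction, deriving the impossibility of a divergence line from the flux balance identities (Lemma~\ref{lem:flux}, Proposition~\ref{prop:div}) combined with the strict Jenkins-Serrin condition satisfied by each $\Omega_k$.

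Suppose $L$ is a divergence line in the divergence set of $\{u_k\}_k$. By Proposition~\ref{prop:div}(1), $L$ is a geodesic arc in $\Omega_\infty$ whose two endpoints are vertices of $\Omega_\infty$ (interior, ideal, or limit ideal vertices $p_\infty^m$); by Proposition~\ref{prop:div}(2), $|F_{u_k}(T)|\to|T|$ for every finite subarc $T\subset L$. Let $\mathcal{P}$ be one of the two components of $\Omega_\infty\setminus L$ and, for $k$ large, set $\mathcal{P}_k=\mathcal{P}\cap\Omega_k$, which (when all endpoints of $L$ are ordinary vertices of $\Omega_k$) is an inscribed polygonal subdomain of $\Omega_k$ strictly contained in it.

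Truncating $\mathcal{P}_k$ by horocycles $H$ at its ideal vertices yields a compact-closure region $\mathcal{P}_k(H)$. By Lemma~\ref{lem:flux}(i), $F_{u_k}(\partial\mathcal{P}_k(H))=0$. Splitting $\partial\mathcal{P}_k(H)$ into $A$-arcs (flux $=+$length, by Lemma~\ref{lem:flux}(iii)), $B$-arcs (flux $=-$length), horocycle arcs (flux of absolute value $\leq$ length, Lemma~\ref{lem:flux}(ii)), and the arc $L\cap\mathcal{P}_k(H)$ (flux with absolute value tending to $|L\cap\mathcal{P}_k(H)|$ by Proposition~\ref{prop:div}(2)), I would sum the contributions and pass to the limit $k\to\infty$ while shrinking horocycles. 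Using that $\alpha_H(\mathcal{P}_k)-\beta_H(\mathcal{P}_k)$ is independent of $H$ (since at each ideal vertex of $\Omega_k$ exactly one $A$-edge meets one $B$-edge), I would arrive at
\[
|\alpha(\mathcal{P})-\beta(\mathcal{P})|=|L|,
\]
which says that the Jenkins-Serrin inequality for the inscribed subdomain $\mathcal{P}_k\subsetneq\Omega_k$ is saturated in the limit. To close the argument, I would appeal to the strict Jenkins-Serrin inequality $|\alpha_H(\mathcal{P}_k)-\beta_H(\mathcal{P}_k)|<|L|+|\text{horo}|$ valid for every $k$ and every sufficiently small horocycle system $H$, and extract a uniform positive gap via the geometric characterization of Lemma~\ref{lem:car}---interior vertices of $\Omega_k$ not adjacent to an ideal vertex $p_j$ lie strictly outside the corresponding horodisk $\overline{D}_j$ with a uniform margin. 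This uniform gap is incompatible with the saturation identity above, yielding the desired contradiction.

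The main obstacle will be the case where an endpoint of $L$ is a limit ideal vertex $p_\infty^m$: then $L$ is not a chord of any $\Omega_k$, and $\mathcal{P}\cap\Omega_k$ is not inscribed in $\Omega_k$ in the ordinary sense. I would address this by truncating with an auxiliary horocycle $C$ at $p_\infty^m$, performing the flux analysis on the resulting compact modified subdomain, and letting $C$ shrink to $p_\infty^m$ simultaneously with $k\to\infty$; the extra flux contribution across $C$ is controlled by Lemma~\ref{lem:flux}(ii), reducing matters to the main case treated above.
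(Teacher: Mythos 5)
Your overall strategy---flux balance on a component of $\Omega_\infty\setminus L$, horocycle truncation, and a contradiction with the Jenkins--Serrin structure via Lemma~\ref{lem:car}---is the same as the paper's. But the step where you close the argument has a genuine gap, in two respects. First, the claim that $\alpha_H(\mathcal{P}_k)-\beta_H(\mathcal{P}_k)$ is independent of the horocycle system $H$ fails precisely in the cases that matter: if $L$ terminates at an ideal (or limit ideal) vertex, then $\mathcal{P}_k$ contains only \emph{one} of the two edges of $\Omega_k$ meeting that vertex, so shrinking the horocycle there changes $\alpha_H-\beta_H$ by the full amount of the shrinkage. As a result your ``saturation identity'' $|\alpha(\mathcal{P})-\beta(\mathcal{P})|=|L|$ is an $\infty-\infty$ statement except in the easy subcase where $L$ has finite length (both endpoints interior boundary points), and the real work consists in tracking the linear growth in the truncation parameter $n$ of $|L(n)|$ and of $\beta_k(n)-\alpha_k(n)$ and comparing the constant terms.

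Second, the ``uniform positive gap'' you invoke is asserted rather than proved, and it is not what Lemma~\ref{lem:car} gives you: that lemma yields a strict inequality for each fixed domain, with no uniformity as $k\to+\infty$ while vertices accumulate at the $p^m_\infty$. It is also not what is needed. The paper splits into cases: (a) if $L$ has finite length, $\beta_k(n)-\alpha_k(n)$ is a constant $c$ independent of $k$, so $F_{u_k}(L)=-c$ with $|c|<|L|$ cannot converge to $\pm|L|$; (b) if $L$ joins two ideal or limit ideal vertices, condition $(\star)$ forces the exact cancellation $\alpha_k(n)=\beta_k(n)$ (equal numbers of $A$- and $B$-edges, each contributing exactly $n$), so $F_{u_k}$ of compact subarcs of $L$ is forced to $0$, contradicting Proposition~\ref{prop:div}(2); (c) if $L$ joins an ideal vertex to an interior boundary point $q$, the bookkeeping $\beta_k(n)-\alpha_k(n)=n-c$, $|L(n)|=n+d$ forces $c+d=0$, which translates into the statement that a specific interior vertex $\widetilde q$ lies in the closed horodisk $\overline{D^m_\mu}$ --- contradicting Lemma~\ref{lem:car}(2) for a \emph{single} large $k$, with no uniformity required. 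Your sketch collapses (b) and (c) into one mechanism and thereby misses both the role of condition $(\star)$ in (b) and the precise geometric identification of the ``gap'' in (c); as written, the contradiction does not go through.
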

\begin{proof}
  Firstly, let us introduce some notation: For $\mu=2i-1$ or
  $\mu=\infty$, consider a sequence of nested horocycles $H^m_\mu(n)$
  at $p^m_\mu$ (in the case $p^m_\mu$ is defined) contained in
  $C^m_\mu$ such that $\mbox{dist}_\htwo(H^m_\mu(n),C^m_\mu)=n$ for
  any $n$.  In particular, the horocycles $H^m_{2i-1}(n)$ are pairwise
  disjoint for $n$ large.  Given a polygonal domain ${\cal P}_k$
  inscribed in $\O_k$, denote by ${\cal P}_k(n)$ the polygonal domain
  bounded by the part of $\partial{\cal P}_k$ outside the horocycles
  $H^m_{2i-1}(n),H^m_\infty(n)$ together with geodesic arcs joining
  points in $\partial{\cal P}_k\cap\left((\cup_{m,i}H^m_{2i-1}(n))\cup
    (\cup_mH^m_\infty(n))\right)$.  Also denote
  \[
  \a_k(n)=\sum_{m=1}^{m_0}\left( \sum_{i=1-k}^{k-1}
    |A^m_i\cap\partial{\cal P}_k(n)|+|\widetilde
    A^m_{-k}\cap\partial{\cal P}_k(n)|+|\widetilde
    A^m_k\cap\partial{\cal P}_k(n)|\right),
  \]
  \[
  \b_k(n)=\sum_{m=1}^{m_0}\left( \sum_{i=1-k}^{k-1}
    |B^m_i\cap\partial{\cal P}_k(n)|+|\widetilde
    B^m_{-k}\cap\partial{\cal P}_k(n)|+|\widetilde
    B^m_k\cap\partial{\cal P}_k(n)|\right),
  \]
  \[
  \ve_k(n)=|\partial{\cal P}_k(n)-\partial{\cal P}_k|.
  \]
  We observe that, for any fixed $k$, $\ve_k(n)\to 0$ as $n\to+\infty$.

  Now, let us prove Lemma~\ref{lem:lines}.  Suppose there exists a
  divergence line $L$ of $\{u_k\}_k$. As $\{\O_k\}_k$ is a monotone
  increasing sequence of domains converging to $\O_\infty$, then we
  can suppose $k$ is large enough so that $L\subset\O_k$.  We denote
  by $L(n)$ the geodesic arc in $L$ outside the horocycles
  $H^m_{2i-1}(n),H^m_\infty(n)$.  By Proposition~\ref{prop:div},
  $|F_{u_k}(L(n))|\to|L(n))|$ as $k\to+\infty$.

  We fix a component ${\cal P}_k$ of $\O_k-L$. By Lemma~
  \ref{lem:flux},
  \[
  |F_{u_k}(L(n))+\a_k(n)-\b_k(n)|\leq\ve_k(n),
  \]
  where $\a_k(n),\b_k(n),\ve_k(n)$ are defined as above for this
  choice of ${\cal P}_k$.
  \begin{itemize}
  \item In the case $L$ has finite length, we have $L(n)=L$ for $n$
    large enough. And $\a_k(n)-\b_k(n)=c$ is constant. Taking limits
    when $n$ goes to $+\infty$, we get $F_{u_k}(L)=-c$. This
    contradicts the fact that $|F_{u_k}(L)|<|L|$ but
    $|F_{u_k}(L)|\to|L|$ as $k\to+\infty$. Then $L$ must have infinite
    length.

  \item If $L$ joins either two ideal vertices
    $p^m_{2i-1},p^{m'}_{2j-1}$, two limit ideal vertices
    $p^m_\infty,p^{m'}_\infty$ or an ideal vertex $p^m_{2i-1}$ to a
    limit ideal vertex $p^{m'}_\infty$, then we have $\a_k(n)=\b_k(n)$
    because of the choice of horocycles above.  For any compact
    geodesic arc $T\subset L(n)$ and $k$ large, we have
    $|F_{u_k}(T)|\leq |F_{u_k}(L(n))|\leq \ve_k(n)$.  Taking
    $n\to+\infty$, we get $F_{u_k}(T)=0$. But this contradicts
    $|F_{u_k}(T)|\to|T|$ as $k\to+\infty$.

  \item Then $L$ must join a vertex $p^{m}_\mu$, with $\mu=2i-1$ or
    $\mu=\infty$, to a point $q$ in $\partial\Omega_k\cap\H^2$. Either
    $q$ is an interior vertex, and we denote it by $\widetilde q$,
    either it lies on an edge of $\Omega_k$ and we call $\widetilde q$
    the interior endpoint of such an edge. We can choose ${\cal P}_k$
    to have $\beta_k(n)\geq\alpha_k(n)$. Hence for $n$ large enough we
    have that $\beta_k(n)-\alpha_k(n)=n-c$, where
    $c=\mbox{dist}_\htwo(q,\widetilde q)$. Then
    \[
    F_{u_k}(L(n))=n-c-F_{u_k}(\partial{\cal P}_k(n)-\partial{\cal
      P}_k).
    \]
    (Observe that, in the case $L$ finishes at $p^{m}_\infty$, all the
    vertices $q_{2i}^m$ are contained in the same horocycle
    $C^{m}_\infty$.) Since $|L(n)|-n=d$ is constant for $n$ large, we
    get $|L(n)|-|F_{u_k}(L(n))|\to d+c$ as $n\to+\infty$.  On the
    other hand, $|F_{u_k}(L(n))|\to|L(n))|$ as $k\to+\infty$. So it
    must hold $c+d=0$. Therefore,
    \[
    \mbox{dist}_\htwo(\widetilde q,\partial H_\mu^m(n))\leq
    \mbox{dist}_\htwo(\widetilde q,q) + \mbox{dist}_\htwo(q,\partial
    H_\mu^m(n))=c+|L(n)|=n,
    \]
    which implies that $\widetilde q$ is contained in the horodisk
    bounded by $C_\mu^m$, in contradiction with the fact that
    $\Omega_k$ is a Jenkins-Serrin dommain (see Lemma~\ref{lem:car}).
\end{itemize}
\end{proof}

\begin{proposition}\label{prop:graph}
  Passing to a subsequence, $\{u_k\}_k$ converges uniformly on compact
  subsets of~$\O_\infty$ to a minimal graph $u_\infty$ such that it
  goes to $+\infty$ (resp. $-\infty$) as we approach within
  $\O_\infty$ to each~$A^m_i$ and each $\widetilde A^m_{-1}$
  (resp. each $B^m_i$ and eahc $\widetilde B^m_1$) in the boundary of
  $\Omega_\infty$.
\end{proposition}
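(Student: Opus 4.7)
My plan splits into two steps: producing the limit $u_\infty$ as a minimal graph on $\O_\infty$, and then verifying the prescribed boundary behavior edge by edge.

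For the first step, I combine Proposition~\ref{prop:div} with Lemma~\ref{lem:lines}. Since $\{u_k\}_k$ admits no divergence lines, the divergence set is empty, so the convergence domain $\mathcal{B}$ equals $\O_\infty$ (here I use that the increasing exhaustion $\O_k\nearrow\O_\infty$ places any fixed compact of $\O_\infty$ inside $\O_k$ for $k$ large, so the notion of convergence domain for the sequence is well-defined). Because $\O_\infty$ is connected and we have normalized $u_k(P)=0$ at a fixed $P\in\O_1$, Proposition~\ref{prop:div}(3) gives, after passing to a subsequence, uniform convergence $u_k\to u_\infty$ on compact subsets of $\O_\infty$ with $u_\infty$ a minimal graph on $\O_\infty$.

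For the second step, fix an edge $E$ of $\O_\infty$ on which the proposition prescribes $u_\infty=+\infty$ (so $E$ is one of the $A_i^m$ or $\widetilde A_{-1}^m$); by the construction of Subsection~\ref{subsec:finite}, $E$ is also an edge of $\O_k$ for all sufficiently large $k$ with $u_k|_E=+\infty$. I take a point $q$ in the relative interior of $E$ and a small geodesic triangle $\Delta\subset\overline{\O_\infty}$ with $q$ in the relative interior of one side $T\subset E$ and with the remaining sides $S_1,S_2$ compactly contained in $\O_\infty$. By the triangle inequality the inscribed-polygon conditions are satisfied, so a classical Jenkins-Serrin/Scherk-type result furnishes a minimal graph $v$ on $\Delta$ with $v|_T=+\infty$ and $v|_{S_1\cup S_2}=0$. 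Uniform convergence on the compact set $S_1\cup S_2$ gives a constant $M$ with $u_k\geq -M$ there for $k$ large. The maximum principle applied to $u_k-(v-M)$ yields $u_k\geq v-M$ on $\Delta$, hence $u_\infty\geq v-M$ on $\Delta$, and since $v\to+\infty$ at $T$ the same is true of $u_\infty$. The case of the $-\infty$ edges $B_i^m$ and $\widetilde B_1^m$ is symmetric, obtained by applying the argument to $-u_k$.

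The main technical subtlety will be the boundary comparison on $T$, where both $u_k$ and $v$ are infinite: the maximum principle needs the standard Jenkins-Serrin fact that two minimal graph solutions sharing $+\infty$ boundary data on a common arc differ by a function which extends boundedly across that arc, so that $u_k-(v-M)$ can be tested on $\partial\Delta$. A secondary point is confirming that a triangle $\Delta$ of the above kind exists for every relative interior point $q$ of $E$; this is straightforward because such $q$ admits a neighborhood in $\overline{\O_\infty}$ whose intersection with $\partial\O_\infty$ lies entirely in $E$, leaving room to place $S_1,S_2$ in $\O_\infty$.
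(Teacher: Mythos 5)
Your first step coincides with the paper's: Lemma~\ref{lem:lines} together with Proposition~\ref{prop:div}(3) and the normalization $u_k(P)=0$ give subsequential uniform convergence on compact subsets of $\O_\infty$ to a minimal graph $u_\infty$ (the only point to make explicit, which you do, is that the exhaustion $\O_k\subset\O_{k+1}\to\O_\infty$ lets one apply the proposition on any fixed compact subdomain). For the boundary values the two arguments genuinely diverge. The paper stays inside the flux formalism: for a bounded subarc $T\subset A^m_i$ one has $F_{u_k}(T)=|T|$ by Lemma~\ref{lem:flux}(iii); this passes to the limit (close $T$ up by two arcs interior to $\O_\infty$ and use $F_{u_k}(\partial\O')=0$ together with convergence of the interior fluxes), and the converse of Lemma~\ref{lem:flux}(iii) --- equality $F_{u_\infty}(T)=|T|$ forces divergence to $+\infty$ --- finishes. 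You instead build a Scherk-type barrier $v$ on a small geodesic triangle $\Delta$ and bound $u_k$ below by $v-M$; this is a legitimate alternative, and the barrier does exist since the Jenkins--Serrin condition for $\Delta$ with data $+\infty$ on $T$ and $0$ on $S_1\cup S_2$ reduces to the triangle inequality $|T|<|S_1|+|S_2|$. The flux route is shorter here because the needed lemmas are already set up in Section~2; the barrier route avoids discussing convergence of boundary fluxes, at the price of the comparison step you flag.

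On that step, one correction: the fact you invoke --- that two solutions with common $+\infty$ data on an arc differ by a function extending boundedly across it --- is not the standard statement and should not be what carries the argument. The right tool is the generalized maximum principle of Jenkins--Serrin type: if $u\geq v$ on $\partial\Delta\setminus T$ and both tend to $+\infty$ on $T$, then $u\geq v$ on $\Delta$. Its proof integrates ${\rm div}\left(\frac{\nabla u}{\sqrt{1+|\nabla u|^2}}-\frac{\nabla v}{\sqrt{1+|\nabla v|^2}}\right)$ over $\{v>u+\ve\}$ and uses that the fluxes of $u$ and of $v$ along subarcs of $T$ both equal the arclength (Lemma~\ref{lem:flux}(iii)), so the contribution from $T$ cancels. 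Alternatively, you can sidestep the two-infinite-arcs comparison entirely by comparing $u_k$ with the solutions $v_n$ on $\Delta$ having finite boundary data $n$ on $T$ and $-M$ on $S_1\cup S_2$ (ordinary maximum principle, the three vertices being a removable finite exceptional set) and letting $n\to+\infty$. With either repair your argument is complete.
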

\begin{proof}
  Since we have translated vertically the graphs $u_k$ so that
  ${u_k(P)=0}$ for any $k$, we get from Lemma~\ref{lem:lines} and
  Proposition~\ref{prop:div} that, after passing to a subsequence,
  $\{u_k\}_k$ converges to a minimal graph $u_\infty$, and the
  convergence is uniform on compact
  subsets of~$\O_\infty$. It is clear that $u_\infty(P)=0$.

  For any bounded geodesic arc $T\subset A^m_i$ we have
  $F_{u_k}(T)=|T|$ by Proposition~\ref{prop:div}; and then
  $F_{u_\infty}(T)=|T|$. Hence $u_\infty$ goes to $+\infty$ as we
  approach $T$ within~$\O_\infty$.  This proves
  $u_\infty|_{A^m_i}=+\infty$. Similarly we get $u_\infty|_{\widetilde
    A^m_{-1}}=+\infty$, $u_\infty|_{B^m_i}=-\infty$ and
  $u_\infty|_{\widetilde B^m_1}=-\infty$, which finishes
  Proposition~\ref{prop:graph}.
\end{proof}

\subsection{Passing to the conjugate surface}

Denote by $\Sigma_\infty$ (resp.  $\Sigma_k$) the graph surface of
$u_\infty$ (resp. $u_k$).  Observe that, if $m\in{\cal M}^+$ (resp.
$m\in{\cal M}^-$) and $i\geq 1$ (resp. $i\leq -1$), then the vertical
straight line $\Gamma^m_i=\{p^m_{2i}\}\times\R$ is contained in the
boundary of $\Sigma_\infty$ and of $\Sigma_k$, for any $k$ large; and
$\Gamma^m_0=\{p^m_0\}\times\R\subset \partial \Sigma_\infty
\cap \partial \Sigma_k$, for any $m$ and any $k$. We also denote
$\widetilde\Gamma^m_i=\{q^m_{2i}\}\times\R$. Then
$\widetilde\Gamma^m_k\subset\partial\Sigma_k$ and
$\widetilde\Gamma^m_{-1}\subset\partial\Sigma_\infty$ when $m\in{\cal
  M}^+$; and $\widetilde\Gamma^m_{-k}\subset\partial\Sigma_k$ and
$\widetilde\Gamma^m_1\subset\partial\Sigma_\infty$, when $m\in{\cal
  M}^-$.

We call $\Sigma_k^*$ the conjugate surface of $\Sigma_k$.  If $\Gamma$
is a curve in $\partial\Sigma_k$, then we denote by $\Gamma(k)^*$ the
corresponding curve in $\Sigma_k^*$.  We know (see
Subsection~\ref{subsec:semi-ideal}; or~\cite{moro1}, section~4) that
$\Sigma_k^*$ is a minimal graph bounded by horizontal geodesic
curvature lines contained in the same horizontal slice,
\[
\partial\Sigma_k^* =\cup_{m\in{\cal M}} \left( \Upsilon_k^{m-}\cup
\Gamma^m_0(k)^*\cup \Upsilon_k^{m+}\right) ,
\]
where
\[
\Upsilon_k^{m-} =\left\{
  \begin{array}{ll}
    \widetilde\Gamma^m_{-k}(k)^*\cup \Gamma^m_{1-k}(k)^*\cup
    \Gamma^m_{2-k}(k)^*\cup \ldots\cup\Gamma^m_{-1}(k)^*
    & \mbox{, if }\  m\in{\cal M}^-\\[3mm]
    \widetilde\Gamma^m_{-1}(k)^* & \mbox{, if }\  m\in{\cal M}-{\cal
      M}^-
  \end{array}
\right.
\]
\[
\Upsilon_k^{m+} =\left\{
  \begin{array}{ll}
    \Gamma^m_1(k)^*\cup\ldots\cup \Gamma^m_{k-2}(k)^*\cup
    \Gamma^m_{k-1}(k)^*\cup \widetilde\Gamma^m_{k}(k)^*
    & \mbox{, if }\  m\in{\cal M}^+\\[3mm]
    \widetilde\Gamma^m_{1}(k)^* & \mbox{, if }\  m\in{\cal M}-{\cal M}^+
  \end{array}
\right.
\]
Up to an isometry of $\htwo\times\R$, we can assume that the
horizontal geodesic curvature lines $\Gamma^m_i(k)^*$ are contained
in the horizontal slice $\htwo\times\{0\}$, and
$\Sigma_k^*\subset\{t\geq 0\}$. Each
$\Gamma^{m}_i(k)^*,\widetilde\Gamma^{m}_i(k)^* \subset\partial
\Sigma_k^*$ corresponds by conjugation, respectively, to
$\Gamma^m_i,\widetilde\Gamma^m_i\subset\partial\Sigma_k$.

If we denote by $\O_k^*$ the vertical projection of $\Sigma_k^*$
over $\htwo\equiv\htwo\times\{0\}$, then
\[
\partial\O_k^*=\cup_{m=1}^{m_0}\Big(\Lambda_k^{m-}\cup
\Gamma^{m}_0(k)^*\cup \eta^{m}_0(k)^*\cup \Lambda_k^{m+} \Big) ,
\]
cyclically ordered, where
\[
\Lambda_k^{m-} =\left\{
  \begin{array}{ll}
    \widetilde\Gamma^{m}_{-k}(k)^* \cup \eta^{m}_{-k}(k)^*\cup
    (\cup_{i=1-k}^{-1}\left(\Gamma^{m}_i(k)^* \cup
      \eta^{m}_i(k)^*\right))
    & \mbox{, if }\  m\in{\cal M}^-\\[3mm]
    \widetilde\Gamma^m_{-1}(k)^*\cup\eta^m_{-1}(k)^* & \mbox{, if }\
    m\in{\cal M}-{\cal M}^-
  \end{array}
\right.
\]
\[
\Lambda_k^{m+} =\left\{
  \begin{array}{ll}
    (\cup_{i=1}^{k-1}\left(\Gamma^{m}_i(k)^* \cup
      \eta^{m}_i(k)^*\right)) \cup \widetilde\Gamma^{m}_k(k)^* \cup
    \eta^{m}_k(k)^* & \mbox{, if }\  m\in{\cal M}^+\\[3mm]
    \widetilde\Gamma^m_{1}(k)^*\cup\eta^m_{1}(k)^* & \mbox{, if }\
    m\in{\cal M}-{\cal M}^+
  \end{array}
\right.
\]
and each $\eta^{m}_i(k)^*$ denotes a complete geodesic curve joining
at $\partial_\infty\htwo$ the corresponding curves in
$\partial\Sigma_k^*$. Furthermore, the curves $\Gamma^{m}_i(k)^*$,
$\widetilde\Gamma^{m}_{i}(k)^*$ are strictly concave with respect to
$\O_k^*$ (by the maximum principle).

Similarly, we denote by $\Sigma_\infty^*$ the conjugate surface of
$\Sigma_\infty$.

\begin{proposition}\label{prop:conj}
  $\Sigma_\infty^*\subset \{t\geq 0\}$ is a minimal graph over a
  domain $\O_\infty^*\subset\htwo$. Moreover,
  $\partial\Sigma_\infty^*\subset\htwo\times\{0\}$ consists of a
  collection of geodesic curvature lines,
  \[
  \partial\Sigma_\infty^*=\cup_{m=1}^{m_0}
  \Big(\Upsilon_\infty^{m-}\cup {\Gamma^{m}_0}^*\cup
  \Upsilon_\infty^{m+} \Big)
  \]
  (cyclically ordered), where
  \[
  \Upsilon_\infty^{m-} =\left\{
    \begin{array}{ll}
      \cup_{i=-\infty}^{-1}{\Gamma^{m}_i}^*
      & \mbox{, if }\  m\in{\cal M}^-\\[3mm]
      \widetilde{\Gamma^m_{-1}}^*
      & \mbox{, if }\  m\in{\cal M}-{\cal M}^-
    \end{array}
  \right.
  \]
  \[
  \Upsilon_\infty^{m+} =\left\{
    \begin{array}{ll}
      \cup_{i=1}^{+\infty}{\Gamma^{m}_i}^*
      & \mbox{, if }\  m\in{\cal M}^+\\[3mm]
      \widetilde{\Gamma^m_{1}}^*
      & \mbox{, if }\  m\in{\cal M}-{\cal M}^+
    \end{array}
  \right.
  \]
  Each ${\Gamma^{m}_i}^*$ is strictly concave with respect to
  $\Omega_\infty^*$. Moreover,
  \[
  \partial\O_\infty^*=\cup_{m=1}^{m_0}\Big(\Lambda_\infty^{m-}\cup
  \Gamma^{m}_0\cup {\eta^{m}_0}^*\cup \Lambda_\infty^{m+} \Big)
  \]
  (cyclically ordered), with
  \[
  \Lambda_\infty^{m-} =\left\{
    \begin{array}{ll}
      \cup_{i=-\infty}^{-1}\left({\Gamma^{m}_i}^* \cup
        {\eta^{m}_i}^*\right)
      & \mbox{, if }\  m\in{\cal M}^-\\[3mm]
      \widetilde{\Gamma^m_{-1}}^*\cup{\eta^m_{-1}}^* & \mbox{, if }\
      m\in{\cal M}-{\cal M}^-
    \end{array}
  \right.
  \]
  \[
  \Lambda_\infty^{m+} =\left\{
    \begin{array}{ll}
      \cup_{i=1}^{+\infty}\left({\Gamma^{m}_i}^* \cup
        {\eta^{m}_i}^*\right) & \mbox{, if }\  m\in{\cal M}^+\\[3mm]
      \widetilde{\Gamma^m_{1}}^*\cup{\eta^m_{1}}^* & \mbox{, if }\
      m\in{\cal M}-{\cal M}^+
    \end{array}
  \right.
  \]
  where ${\eta^{m}_i}^*$ denotes a complete geodesic curve asymptotic
  to its consecutive curves of $\partial\O_\infty^*$ at
  $\partial_\infty\htwo$.
\end{proposition}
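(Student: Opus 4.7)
The plan is to obtain $\Sigma_\infty^*$ as a limit (up to vertical translations and horizontal isometries) of the conjugate surfaces $\Sigma_k^*$, and then to read off the graph property and boundary structure directly from what was already established in Subsection~\ref{subsec:semi-ideal} for the finite-vertex case. First I would apply Krust's Theorem~\ref{th:krust} to $\Sigma_\infty$: by Proposition~\ref{prop:graph}, $\Sigma_\infty$ is a minimal graph over the convex semi-ideal polygonal domain $\Omega_\infty$, so its conjugate $\Sigma_\infty^*$ is also a minimal graph over some (possibly non-convex) domain $\Omega_\infty^*\subset\htwo$. This already rules out self-intersections and gives us the graph structure; the remaining work is purely to identify the boundary and the half-space containing $\Sigma_\infty^*$.

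Next I would use the conjugation dictionary of Theorem~\ref{th:conjugate}. The boundary $\partial\Sigma_\infty$ consists, on the one hand, of the vertical straight lines $\Gamma^m_i=\{p^m_{2i}\}\times\R$ and $\widetilde\Gamma^m_{\pm 1}=\{q^m_{\pm 2}\}\times\R$ (corresponding to the interior vertices of $\Omega_\infty$); and, on the other hand, of the ``edges at infinity'' $A^m_i,\widetilde A^m_{-1},B^m_i,\widetilde B^m_1$ along which $u_\infty=\pm\infty$. Under conjugation, each vertical straight line in $\partial\Sigma_\infty$ becomes a horizontal geodesic curvature line ${\Gamma^m_i}^*$ or $\widetilde{\Gamma^m_{\pm 1}}^*$ contained in some horizontal slice; and each boundary arc where $u_\infty$ diverges corresponds to an asymptotic vertical geodesic plane ${\eta^m_i}^*\times\R$, so that $\partial\Omega_\infty^*$ decomposes as claimed. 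The cyclic ordering of the pieces of $\partial\Omega_\infty^*$ is inherited from the cyclic ordering of $\partial\Omega_\infty$ via the conjugate correspondence.

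To align heights I would use convergence. Up to a vertical translation and a horizontal isometry I may assume, as in the compact case, that $\Gamma^m_0(k)^*\subset\htwo\times\{0\}$ and $\Sigma_k^*\subset\{t\geq 0\}$ for every $k$. Because $u_k\to u_\infty$ uniformly on compact subsets of $\Omega_\infty$ (by Proposition~\ref{prop:graph}), the isometric conjugates $\Sigma_k^*\to\Sigma_\infty^*$ uniformly on compact subsets, once we fix the same normalization for $\Sigma_\infty^*$. Now, for any fixed $m,i$, the curve $\Gamma^m_i(k)^*$ lies in $\htwo\times\{0\}$ for all $k$ sufficiently large (this was proved in \cite{moro1}, using that $\Gamma^m_i\subset\partial\Sigma_k$ for large $k$ and that all horizontal geodesic curvature lines in $\partial\Sigma_k^*$ share the same slice). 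Passing to the limit, the corresponding curve ${\Gamma^m_i}^*$ in $\partial\Sigma_\infty^*$ also lies in $\htwo\times\{0\}$. The same argument applied to $\widetilde\Gamma^m_{\pm 1}$ (which belong to $\partial\Sigma_k$ for every $k$ when $m\notin\mathcal M^\pm$) yields that $\widetilde{\Gamma^m_{\pm 1}}^*\subset\htwo\times\{0\}$ as well, and the half-space inclusion $\Sigma_\infty^*\subset\{t\geq 0\}$ follows from the analogous inclusion for each $\Sigma_k^*$. Finally, strict concavity of the curves ${\Gamma^m_i}^*$ and $\widetilde{\Gamma^m_{\pm 1}}^*$ with respect to $\Omega_\infty^*$ is a direct consequence of the maximum principle applied to $\Sigma_\infty^*$ and the horizontal slice (exactly as in the compact case in Subsection~\ref{subsec:semi-ideal}).

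The main obstacle I anticipate is controlling what happens to the ``escaping'' boundary pieces $\widetilde A^m_{\pm k}$, $\widetilde B^m_{\pm k}$ of $\Omega_k$ when $m\in\mathcal M^\pm$: their conjugate curves $\widetilde\Gamma^m_{\pm k}(k)^*$ live in $\partial\Sigma_k^*$ but have no counterpart in $\partial\Sigma_\infty^*$, since $p^m_{\pm 2k}$ and $q^m_{\pm 2k}$ recede to the limit ideal vertex $p^m_\infty$ as $k\to+\infty$. I would handle this by showing that these pieces, together with the intervening geodesics $\eta^m_{\pm k}(k)^*$, converge in a Hausdorff sense to a single complete geodesic of $\partial_\infty\htwo$ ending at $p^m_\infty$, so that they contribute no horizontal curvature line to $\partial\Sigma_\infty^*$ but only reshape $\partial\Omega_\infty^*$ near $p^m_\infty$. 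In particular the asymptotic behavior of the geodesics ${\eta^m_i}^*$ in $\partial\Omega_\infty^*$ at $\partial_\infty\htwo$ is to be asymptotic to its consecutive boundary pieces, exactly as in the statement.
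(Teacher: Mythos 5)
Your overall strategy (Krust's theorem, the conjugation dictionary of Theorem~\ref{th:conjugate}, and realizing $\Sigma_\infty^*$ as a limit of the $\Sigma_k^*$) is the same as the paper's, but there is a genuine gap at the step where you pass from ``each ${\Gamma^m_i}^*$ lies in $\htwo\times\{0\}$ and is concave'' to the claimed decomposition of $\partial\O_\infty^*$ with complete geodesics ${\eta^m_i}^*$ between consecutive concave arcs. Nothing in your argument rules out that two consecutive curves ${\Gamma^m_i}^*$ and ${\Gamma^m_{i+1}}^*$ share an endpoint at $\partial_\infty\htwo$; if they did, the geodesics $\eta^m_i(k)^*$ would degenerate in the limit, the corresponding planar end would collapse, and the stated boundary structure would fail. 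This is precisely the point the paper spends the most effort on: assuming the two curves meet at a point $Q\in\partial_\infty\htwo$, it traps $u_\infty^*$ between barriers $w^\pm$ (graphs with data $\pm\infty$ on a geodesic cutting off a triangle near $Q$ and $0$ on the ideal boundary) to conclude $u_\infty^*\to 0$ at $Q$, hence $\mbox{dist}_{\Sigma_\infty^*}({\Gamma^m_i}^*,{\Gamma^m_{i+1}}^*)=0$; this contradicts the fact that $\Sigma_\infty^*$ is isometric to $\Sigma_\infty$, where the distance between $\Gamma^m_i$ and $\Gamma^m_{i+1}$ is at least $\mbox{dist}_\htwo(p^m_{2i},p^m_{2i+2})>0$. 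You need this (or an equivalent separation argument) before you can assert the existence of the ${\eta^m_i}^*$.

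A secondary, smaller issue: you assert that $u_k\to u_\infty$ uniformly on compacts implies $\Sigma_k^*\to\Sigma_\infty^*$ ``once we fix the same normalization.'' Conjugation does not commute with limits for free; the paper justifies the convergence of $\{\Sigma_k^*\}_k$ by noting that the convergence domain of $\{u_k\}_k$ is all of $\O_\infty$, so the angle functions $\nu_k=\nu_k^*$ are uniformly bounded away from zero on compact sets, which excludes divergence lines for the conjugate graphs; the limit is then identified with $\Sigma_\infty^*$ via the Hopf differential (Theorem~6 of~\cite{HST}). Your appeal to~\cite{moro1} covers related facts for fixed $k$, but you should make the convergence-of-conjugates step explicit. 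By contrast, the ``main obstacle'' you single out (the escaping pieces $\widetilde\Gamma^m_{\pm k}(k)^*$) is handled essentially automatically by convergence on compact subsets and is not where the real difficulty lies.
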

\begin{proof}
  Theorem~\ref{th:krust} says $\Sigma_\infty^*$ is a minimal graph
  over certain domain $\O_\infty^*\subset\htwo$, because
  $\Sigma_\infty$ is a minimal graph over a convex domain. Moreover,
  since $\partial\Sigma_\infty=\cup_{m,i}\Gamma^m_i$ and each
  $\Gamma^{m}_i$ is a vertical geodesic curve, we get by
  Theorem~\ref{th:conjugate} that the boundary of $\Sigma_\infty^*$ is
  composed of horizontal geodesic curvature lines ${\Gamma^{m}_i}^*$.
  But we do not know a priori if they are all contained in the same
  horizontal slice.

  Let us prove that $\Sigma_\infty^*$ can be obtained as a limit of a
  subsequence of the conjugate graphs~$\Sigma_k^*$, when $k$ goes to
  $+\infty$ (in which case the curves
  $\Gamma^{m}_i(k)^*\subset\partial\Sigma_k^*$ converge to
  ${\Gamma^{m}_i}^*$).  This holds by~\cite[Proposition 2.10]{moro1},
  but we give the idea of the proof: If we prove that, after passing
  to a subsequence, the graphs $\Sigma_k^*$ converge to a surface $S$,
  then up to isometries of $\htwo\times\R$ we get $S=\Sigma_\infty^*$
  by Theorem~6 in~\cite{HST} (both $\Sigma_\infty^*,S$ are isometric
  to $\Sigma_\infty$; and the Hopf differentials associated to their
  vertical projection coincide with $-Q_\infty$, where $Q_\infty$ is
  the Hopf differential associated to the vertical projection of
  $\Sigma_\infty$). So we only have to obtain that the sequence
  $\{\Sigma_k^*\}$ converges. We know that the convergence domain
  associated to $\{u_k\}_k$ coincides with~$\O_\infty$.  Then, if we
  denote by $\nu_k$ the angle function of $\Sigma_k$, then
  $\{\nu_k\}_k$ is uniformly bounded away from zero on compact
  subsets. Since the angle function $\nu_k^*$ of $\Sigma_k^*$
  coincides with the one of $\Sigma_k$, then the same happens for
  $\{\nu_k^*\}_k$. We deduce from here that there are no divergence
  lines for $\{u_k^*\}_k$, and we get the convergence of the graphs
  $\Sigma_k^*$, passing to a subsequence.

  Since the graphs $\Sigma_k^*$ converge to $\Sigma_\infty^*$, then
  $\Sigma_\infty^*\subset \{t\geq 0\}$ and
  $\partial\Sigma_\infty^*\subset \{t=0\}$. We also deduce that the
  curves ${\Gamma^{m}_i}^*$ are cyclically ordered as follows:
  ${\Gamma^{m}_i}^*\leq {\Gamma^{m'}_j}^*$ if, and only if, $m<m'$ or
  $m=m'$ and $i\leq j$. By the maximum principle (using vertical
  geodesic planes), each ${\Gamma^{m}_i}^*\subset\partial\O_\infty^*$
  is strictly concave with respect to $\O_\infty^*$.

  Let us now prove that ${\Gamma^{m}_i}^*,{\Gamma^{m}_{i+1}}^*$ cannot
  finish at the same point $Q$ of $\partial_\infty\htwo$. Suppose this
  is the case. Since ${\Gamma^{m}_i}^*,{\Gamma^{m}_{i+1}}^*$ are
  strictly concave with respect to $\O_\infty^*$, we get
  $\mbox{dist}_{\htwo}({\Gamma^{m}_i}^*, {\Gamma^{m}_{i+1}}^*)=0$.
  Consider a triangle $T\subset\Omega_\infty^*$ bounded by subarcs of
  ${\Gamma^{m}_i}^*,{\Gamma^{m}_{i+1}}^*$ and a geodesic arc $c'$
  joining points in ${\Gamma^{m}_i}^*,{\Gamma^{m}_{i+1}}^*$.  Let
  $u_\infty^*:T\to\R$ define the graph $\Sigma_\infty^*$ over $T$.
  Then $u_\infty^*$ has boundary values $0$ on
  ${\Gamma^{m}_i}^*,{\Gamma^{m}_{i+1}}^*$ and a bounded continuous
  function over~$c'$. We call $c$ the complete geodesic of $\htwo$
  containing $c'$ and we consider the minimal graph $w^+$
  (resp. $w^-$) over the component $\Delta$ of $\htwo-c$ which
  contains $T$, which has boundary values $+\infty$ (resp. $-\infty$)
  over $c$ and $0$ over $\partial\Delta\cap\partial_\infty\htwo$. By
  the maximum principle, $w^-|_T\leq u_\infty^*\leq w^+|_T$. Hence we
  deduce that $u_\infty^*$ converges to $0$ as we approach $Q$ in any
  direction, and then
  $\mbox{dist}_{\Sigma_\infty^*}({\Gamma^{m}_i}^*,{\Gamma^{m}_{i+1}}^*)=0$.
  But $\Sigma_\infty,\Sigma_\infty^*$ are isometric and
  $\mbox{dist}_{\Sigma_\infty}({\Gamma^{m}_i},{\Gamma^{m}_{i+1}})\geq
  \mbox{dist}_{\htwo}({p^{m}_{2i}},{p^{m}_{2i+2}})>0$, a
  contradiction.

  Therefore, the geodesics $\eta^{m}_i(k)^*$ in the boundary of
  $\Omega_k^*$ converge to a geodesic
  ${\eta^{m}_i}^*\subset\partial\Omega_\infty^*$ over which
  $\Sigma_\infty^*$ goes to $+\infty$. Thus
  $\partial\O_\infty^*=\cup_{m=1}^{m_0}
  \Big(\cup_{i=-\infty}^{+\infty}\left(\Gamma^{m\, *}_i \cup \eta^{m\,
      *}_i\right)\Big)$, cyclically ordered. This finishes the proof
  of Proposition~\ref{prop:conj}.
\end{proof}

If we reflect $\Sigma_\infty^*$ with respect to $\htwo\times\{0\}$,
we get a properly embedded minimal surface~$M$ of genus zero and
infinitely many planar ends in $\htwo\times\R$.  The non-limit ends
of $M$ are asymptotic to the vertical geodesic planes $\eta^{m\,
*}_i\times\R$. We can deduce that there is exactly one limit end
from $\eta^{m\, *}_0\times\R$ to $\eta^{m+1\,*}_0\times\R$, that we
call $E^m_\infty$; and $E^m_\infty$ is a left (resp. right, 2-sided)
limit end when $p^m_\infty$ is  a left (resp. right, 2-sided) limit
ideal vertex.

\section{Proof of Theorem~\ref{th:finite}: infinite countable case}
\label{sec:countable} In this section we construct properly embedded
minimal surfaces in $\htwo\times\R$ with genus zero, infinitely many
vertical planar ends and an infinite countable number of limit ends
$\{E^m_\infty\ |\ m\in\N\}$. Furthermore, as in the finite case, we
can prescribe if each limit end $E^k_\infty$ is left, right or
2-sided.

We follow the same sketch as in section~\ref{sec:finite}. We firstly
construct, by taking limits of a monotone increasing sequence of
convex Jenkins-Serrin semi-ideal polygonal domains $\O_k$ with
finitely many vertices and satisfying condition $(\star)$, a convex
semi-ideal polygonal domain $\O_\infty$ with an infinite countable
number of limit ideal vertices $\{p_\infty^m\ |\ m\in\N\}$ such
that, if $E_\infty^m$ is prescribed to be a left (resp. a right or a
2-sided) limit end, then $p_\infty^m$ is a left (resp. a right or a
2-sided) limit ideal vertex. The remaining part of the construction
follows exactly as in Section~\ref{sec:finite}, replacing $m_0$ by
$+\infty$ and ${\cal M}$ by $\N$.

\subsection{Construction of the domains}

Consider $p_\infty^1=\infty$ and two ideal points
$p_\infty^2=(x_\infty^2,0)$ and $p_\infty^3=(x_\infty^3,0)$, with
$-1<x_\infty^2<x_\infty^3\leq 1$. These points will be limit ideal
vertices of $\O_\infty$. We call $C^1_\infty=\{y=1\}$ and
$C_\infty^2$ (resp. $C_\infty^3$) the horocycle at $p_\infty^2$
(resp. $p_\infty^3$) passing through $P_0=(0,1)$.

\begin{figure}
\begin{center}
\includegraphics[scale=1]{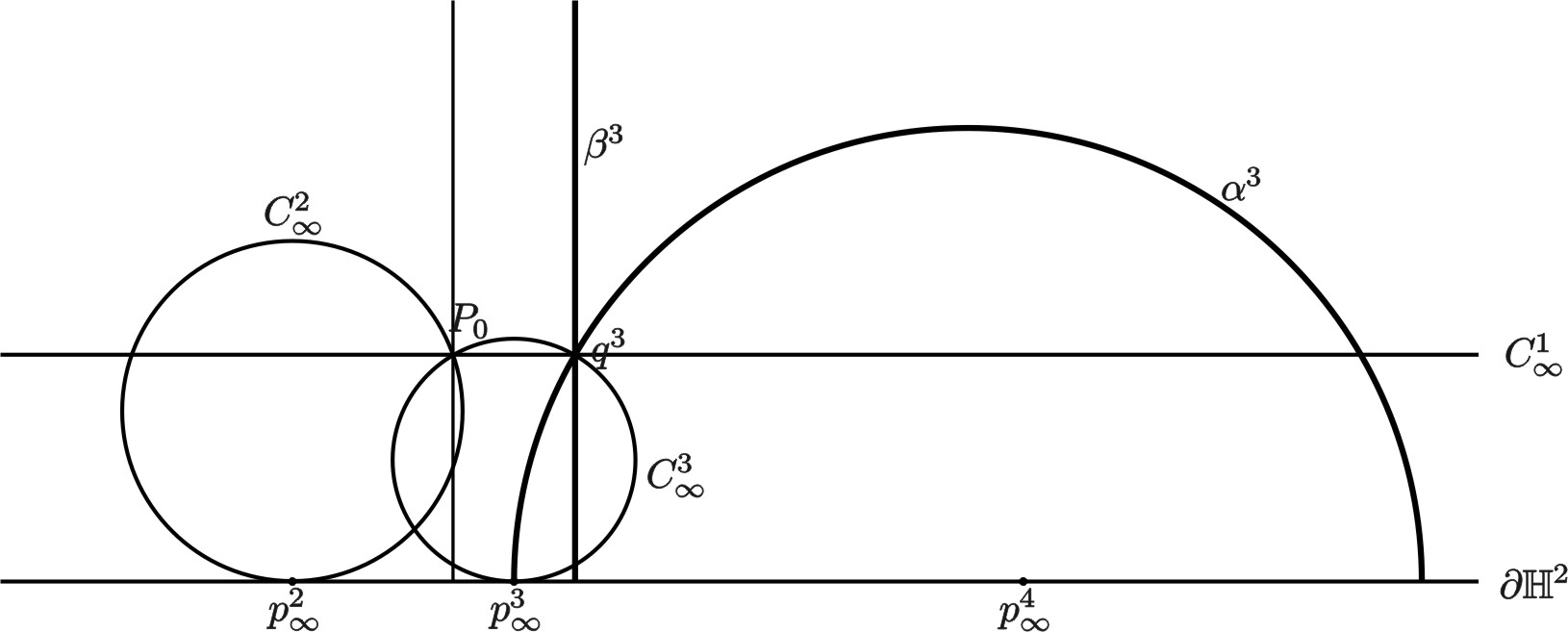}
\caption{Choice of $p_\infty^4$.} \label{fig:contable1}
\end{center}
\end{figure}

Let $q^3$ be the point in $C_\infty^3\cap C_\infty^1$ different
from~$P_0$, $\a^3$ be the complete geodesic curve with endpoint
$p_\infty^3$ passing through $q^3$ and $\b^3=\{x=b^3\}$, where $b^3$
is the constant for which $\b^3$ passes through $q^3$. We take any
point $p_\infty^4=(x_\infty^4,0)$ with $b^3\leq x_\infty^4\leq a^3$,
where $(a^3,0)$ is the endpoint of $\a^3$ different
from~$p_\infty^3$, see Figure~\ref{fig:contable1}.

Let us now define by induction the remaining limit ideal vertices
$p_\infty^k$, $k\in\N$, of $\O_\infty$. Assume we have define
$p_\infty^4=(x_\infty^4,0),\ldots, p_\infty^k=(x_\infty^k,0)$, with
$x_\infty^3<x_\infty^4<\ldots<x_\infty^k<+\infty$. For any $4\leq
i\leq k$, let $C_\infty^i$ be the horocycle at $p_\infty^i$ passing
through $P_0=(0,1)$, and $q^i$ be the point in $C_\infty^i\cap
C_\infty^1$ different from $P_0$. We also consider the complete
geodesic curve $\a^i$ with endpoint $p_\infty^i$ passing through
$q^i$ and $\b^i=\{x=b^i\}$ the geodesic which contains $q^i$. Denote
by $(a^i,0)$ the endpoint of $\a^i$ different from~$p_\infty^i$. We
assume that $b^{i-1}\leq x_\infty^i\leq a^{i-1}$. We now define
$p_\infty^{k+1}$ with the same property: We take any point
$p_\infty^{k+1}=(x_\infty^{k+1},0)$ such that $b^{k}\leq
x_\infty^{k+1}\leq a^{k}$.

We now want to define the non-limit ideal vertices of $\O_\infty$.
We consider:
\[
\mathcal{M}^+=\{ m\in\N\ |\ E_\infty^{m+1}\ \mbox{ is prescribed to
be either a right or a 2-sided limit end}\},
\]
\[
\mathcal{M}^-=\{ m\in\N\ |\ E_\infty^{m}\ \mbox{ is prescribed to be
either a left or a 2-sided limit end}\}.
\]
For any $m\in\N$, we define exactly as in
Subsection~\ref{subsec:finite} (using this new definition of the
sets $\mathcal{M}^+, \mathcal{M}^-$) the ideal vertices $p_{2i-1}^m$
of $\O_\infty$ placed from $p_\infty^m$ to $p_\infty^{m+1}$, the
horocycles $C_{2i-1}^m$, the interior vertices $p_{2i}^m$ and the
interior points $q_{2i}^m$.

We can now define the monotone sequence of semi-ideal Jenkins-Serrin
polygonal domains $\O_k$: We call $\O_1$ the semi-ideal polygonal
domain with vertices
\[
\{p_\infty^m, q_{-2}^m, p_{-1}^m, p_0^m, p_1^m, q_2^m\ |\ 1\leq
m\leq 2\} \cup\{p_\infty^3,\, q^3\}.
\]
For $k\geq 2$, let $\O_k$ be defined as the semi-ideal polygonal
domain with vertices
\[
\{p_\infty^m,p_{-1}^m,p_0^m,p_1^m\ |\ 1\leq m\leq k+1\} \cup
\mathcal{V}_k^- \cup \mathcal{V}_k^+ \cup\{p_\infty^{k+2},\,
q^{k+2}\} ,
\]
where
\[
\mathcal{V}_k^-=\{q_{-2k}^m,p_{1-2k}^m,p_{2-2k}^m,\ldots,
p_{-3}^m,p_{-2}^m\ |\ 1\leq m\leq k+1,\ m\in\mathcal{M}^-\}
\]
\[
\cup
\{q_{-2}^m\ |\ 1\leq m\leq k+1,\ m\in\mathcal{M}-\mathcal{M}^-\},
\]
\[
\mathcal{V}_k^+=\{p_{2}^m,p_{3}^m,\ldots,
p_{2k-2}^m,p_{2k-1}^m,q_{2k}^m\ |\ 1\leq m\leq k+1,\
m\in\mathcal{M}^+\}
\]
\[
\cup \{q_{2}^m\ |\ 1\leq m\leq k+1,\
 m\in\mathcal{M}-\mathcal{M}^+\}.
\]

\begin{figure}
\begin{center}
\includegraphics[scale=1.1]{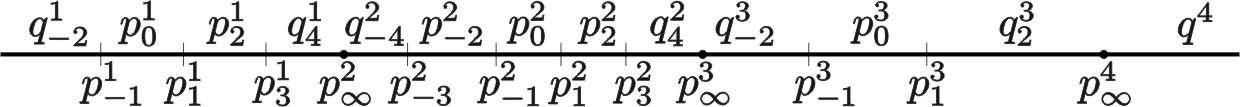}
\caption{Sketch of the vertices of $\O_2$ in the case $p_\infty^1$
and $p_\infty^3$ are right limit ideal vertices, $p_\infty^2$ is a
2-sided limit ideal vertex and $p_\infty^4$ is a left limit ideal
vertex .} \label{fig:contable2}
\end{center}
\end{figure}

The domain $\O_k$ has $4(k+1)+2+N$ vertices, where $2(k+1)\leq N\leq
2(k+1)(2k-1)$ depends on the number of left, right or 2-sided limit
ideal ends in $\{E_\infty^1,\ldots,E_\infty^{k+1}\}$. As in
Subsection~\ref{subsec:finite}, $\O_k$ is a convex Jenkins-Serrin
semi-ideal polygonal domain satisfying condition $(\star)$, and
$\O_k\subset\O_{k+1}$. When $k$ goes to $+\infty$, $\O_k$ converges
to the convex semi-ideal polygonal domain $\O_\infty$ with set of
vertices $\mathcal{V}_\infty^-\cup \mathcal{V}^0\cup
\mathcal{V}_\infty^+$, where
\[
\mathcal{V}_\infty^-=\{p_{-2k}^m,p_{1-2k}^m\ |\ k\in\N,\
m\in\mathcal{M}^-\} \cup \{q_{-2}^m\ |\
m\in\mathcal{M}-\mathcal{M}^-\},
\]
\[
\mathcal{V}_\infty^0=\{p_\infty^m,p_{-1}^m,p_0^m,p_1^m\ |\ m\in\N\}
\]
\[
\mathcal{V}_\infty^+=\{p_{2k-1}^m,p_{2k}^m\ |\ k\in\N,\
m\in\mathcal{M}^+\} \cup \{q_{2}^m\ |\
m\in\mathcal{M}-\mathcal{M}^+\}.
\]

\bibliographystyle{plain}

\noindent
M. Magdalena Rodr\'\i guez\\
Departamento de Geometr\'\i a y Topolog\'\i a\\
Universidad de Granada\\
Campus de Fuentenueva, s/n\\
18071, Granada, Spain\\
e-mail: \texttt{magdarp@ugr.es}

\end{document}